\newtheorem{proposition}{Proposition}
\newtheorem{defn}{Definition}
\newtheorem{lemma}{Lemma}
\newtheorem{theorem}{Theorem}
\newtheorem{remark}{Remark}
\numberwithin{equation}{section}
\newcommand{\R}{\mathbb{R}}
\def\pa{\partial}
\def\na{\nabla}
\def\eps{\varepsilon}
\def\div{\mathrm{div}\, }
\DeclareMathOperator*{\argmin}{arg\,min}
\title{
Diffusion-aggregation equations and 
 volume-preserving mean curvature flows}
\newcommand{\footremember}[2]{%
    \footnote{#2}
    \newcounter{#1}
    \setcounter{#1}{\value{footnote}}%
}
\author{
  Jiwoong Jang \footremember{trailer}{Department of Mathematics, University of Maryland-College Park (jjang124@umd.edu)},
  Antoine Mellet \footremember{alley}{Department of Mathematics, University of Maryland-College Park (mellet@umd.edu).  Partially supported by NSF Grant DMS-2307342.
}
  }
\date{}
\providecommand{\keywords}[1]{\textbf{Key words.} #1}
\providecommand{\MSC}[1]{\textbf{MSC codes.} #1}
\begin{document}
\maketitle

\pagestyle{myheadings}
\thispagestyle{plain}

\begin{abstract}

The Patlak-Keller-Segel system of equations (PKS) is a classical example of aggregation-diffusion equation. It describes the aggregation of some organisms via chemotaxis, limited by some nonlinear diffusion. 
It is known that for some choice of this nonlinear diffusion, the PKS model asymptotically leads to phase separation and mean-curvature driven free boundary problems. 
In this paper, we focus on the Elliptic-Parabolic PKS model 
and we obtain the first unconditional convergence result in dimension $2$ and $3$ towards the volume preserving mean-curvature flow.
This work builds up on previous results that were obtained under the assumption that phase separation does not cause energy loss in the limit. 
In order to avoid this assumption, we  rely on Brakke type formulation of the mean-curvature flow 
and a reinterpretation of the problem as an Allen-Cahn equation with a nonlocal forcing term.
\end{abstract}

\keywords{Chemotaxis, Singular limit, Mean curvature flow, Varifolds.}

\vspace{0.2cm}

\MSC{35A15, 35K55, 53E10.}

\section{Introduction}\label{sec:introduction}

\subsection{The Patlak-Keller-Segel model}\label{subsec:PKS}
The classical \emph{parabolic-parabolic Patlak-Keller-Segel (PKS) system} for chemotaxis with nonlinear diffusion can be written as:
\begin{align}\label{eq:elliptic-parabolic}
\begin{cases}
\alpha\pa_t \rho -\mathrm{div}(\rho\nabla f'(\rho))+\chi\mathrm{div}(\rho\nabla\phi)=0, \\
\beta\partial_t\phi-\eta\Delta\phi=\rho-\sigma\phi.
\end{cases}
\end{align}
In this model, $\rho(t,x)$ is the density function of some organisms (such as bacteria or other types of cells) and $\phi(t,x)$ denotes the concentration of a chemoattractant. The organisms are advected toward  regions of higher concentration of the chemoattractant with proportionality constant $\chi>0$. The parameter $\eta>0$ denotes the diffusivity of the chemoattractant and the constant $\sigma>0$ denotes the destruction rate of the chemoattractant.

We note that the relaxation time of the organisms (the parameter $\alpha$ in \eqref{eq:elliptic-parabolic}) and that of the chemoattractant (the parameter $\beta$) have no reason to be equal and can in fact differ by several orders of magnitude.
In this paper,  we will consider the particular case  
$$\alpha=0,\quad \beta=1$$
which leads to the \emph{elliptic-parabolic PKS model} (note that
formally at least, the results presented in this paper are generalizable to regimes in which the organisms reach their equilibrium much faster than the chemoattractant diffuses, though the rigorous analysis is much  more challenging in that case).
The  opposite regime $\alpha=1$, $\beta=0$ yields the popular \emph{parabolic-elliptic PKS model} in which  the chemoattractant reaches its equilibrium much faster than the organisms. The well-posedness of the PKS model has received a lot of attention for a wide range of nonlinearity $f(\rho)$ (see \cite{JL92,HV96,M17,KMW24(2)} and the references therein).
The nonlinear diffusion term in \eqref{eq:elliptic-parabolic} takes into account the natural repulsive forces acting on the organisms and plays a very important role in our paper.
A classical fact about the PKS model is  that linear diffusion (which corresponds to $f(\rho) = \rho \log\rho$ with our notations) can lead to finite time blow up of the solution ($\lim_{t\to t^*} \| \rho(t)\|_{L^\infty} = +\infty$), a phenomena which describes the concentration of the organisms \cite{BBR11}. 
We are not interested in this phenomena 
and we will consider models with stronger repulsion  for large values of $\rho$ which do not lead to finite time blow-up.
A common choice for this nonlinearity $f$ is the power law 
$$f_m(\rho)=\frac{\rho^m}{m-1}, \qquad  m>1,$$ 
which penalizes high densities of the organisms.
For $m>m_c := 2-\frac{2}{d}$, it has been shown that concentration cannot occur and that solutions remain bounded in $L^\infty$ uniformly in time.
Another interesting phenomena occurs when $m>2$: 
Several recent work \cite{KMW24,KMW24(2),M23} have shown that in this case,
not only do solutions remain bounded for all time, but they experience phase separation phenomena at appropriate time scale.
 
\medskip

A possible microscopic interpretation of the model with large $m$ is that  the organisms have some finite size and that overlapping  is permitted, but heavily penalized. 
In the limit $m\to\infty$,  we get a hard sphere - or incompressible -  model in which overlapping is prohibited and the density must satisfy an a priori upper bound.
This corresponds to the nonlinearity
 $$f_{\infty}(\rho)=\begin{cases}
 0 & \mbox{ for } \rho\leq1,\\
 \infty & \mbox{ for } \rho>1.
 \end{cases}
 $$
The corresponding equation enforces the congestion constraint $\rho\leq1$ and requires the introduction of a pressure term (Lagrange multiplier) in the equation.
Since $f_\infty$ is convex, we can define its subdifferential  $\pa f_\infty(\rho)$ and rewrite the first equation in \eqref{eq:elliptic-parabolic} as
$$
\alpha\pa_t \rho -\mathrm{div}(\rho\nabla p)+\chi\mathrm{div}(\rho\nabla\phi)=0, \qquad p\in \pa f_\infty(\rho),
$$
where the condition $p\in \pa f_\infty(\rho)$ can also be written as $\rho \leq 1$, $ p\geq 0$ and $ p(1-\rho)=0$.
We  stress out the fact that this singular limit $m\to\infty$ has the double effect of enforcing the constraint $\rho\leq1$ and eliminating all diffusion when $\rho<1$.
In this paper we will take a nonlinearity $f(\rho)$ which  combines some nonlinear diffusion for small value of $\rho$ together with the hard-sphere constraint $\rho\leq1$: From now on, we take
\begin{equation}\label{eq:f0}
f(\rho):=A f_2(\rho) + f_\infty(\rho)=
\begin{cases}
A \rho^2 & \mbox{ if } \rho \in[0,1] \\
+\infty & \mbox{ otherwise} 
\end{cases}
,\qquad A>0.
\end{equation}

Our goal is to show that phase separation takes place,  resulting in the formation of high and low density regions separated by a sharp interface, and  to characterize the evolution of that  interface. 
A similar study was undertaken in several recent papers (see \cite{KMW24,KMW24(2),M23,MR24}) 
for the parabolic-elliptic and elliptic-parabolic PKS model and recently extended to the full parabolic-parabolic model in \cite{R25}.
As in these papers, we assume that a large number of individuals are observed from far away:
We introduce a parameter $\varepsilon>0$ that quantifies the large (initial) mass
\[
\int\rho_{\textrm{in}}(x)dx=\varepsilon^{-d} \gg 1,
\]
and rescale the problem as follows: 
\[
x\mapsto \varepsilon x,\qquad t\mapsto \eta \beta^{-1}\varepsilon^2 t.
\]
In these new (macroscopic)  variables,  \eqref{eq:elliptic-parabolic} becomes:
\begin{align}\label{eq:elliptic-parabolic-rescaled0}
\begin{cases}
\frac{\eta \alpha}{\beta}\pa_t\rho^\eps
-\mathrm{div}(\rho^{\varepsilon}\nabla p^\eps )+\chi\mathrm{div}(\rho^{\varepsilon}\nabla\phi^{\varepsilon})=0,  \quad p^\eps \in \pa f(\rho^{\varepsilon})\quad &\text{in}\quad(0,\infty)\times\Omega,\\
\partial_t\phi^{\varepsilon}- \Delta\phi^{\varepsilon}=\eta^{-1} \varepsilon^{-2}(\rho^{\varepsilon}-\sigma\phi^{\varepsilon}), \quad &\text{in}\quad(0,\infty)\times\Omega.
\end{cases}
\end{align}
Here $\Omega$ is a fixed domain (which has macroscopic size $\sim 1$).
This system will be supplemented with no-flux boundary conditions on $\partial\Omega$ and with the initial conditions by given profiles $\rho_{\textrm{in}}^{\varepsilon},\phi_{\textrm{in}}^{\varepsilon}$ now with normalized initial mass
\begin{equation}\label{eq:mass}
\int_\Omega \rho(x,t) \, dx = 
\int_{\Omega}\rho_{\textrm{in}}^{\varepsilon}(x)dx=1.
\end{equation}
We note that by redefining $\sqrt{\frac{\eta}{\sigma}} \eps \mapsto \eps$, $\sigma\phi\mapsto \phi$, $\frac{\sigma}{\chi} f \mapsto f$, we can rewrite \eqref{eq:elliptic-parabolic-rescaled0} as
\begin{align}\label{eq:elliptic-parabolic-rescaled1}
\begin{cases}
\alpha'\pa_t \rho^\eps
-\mathrm{div}(\rho^{\varepsilon}\nabla p^\eps)+ \mathrm{div}(\rho^{\varepsilon}\nabla\phi^{\varepsilon})=0,\qquad p^\eps\in\pa f (\rho^{\varepsilon}) \quad &\text{in}\quad(0,\infty)\times\Omega,\\
\partial_t\phi^{\varepsilon}- \Delta\phi^{\varepsilon}=  \varepsilon^{-2}(\rho^{\varepsilon}- \phi^{\varepsilon}), \quad &\text{in}\quad(0,\infty)\times\Omega.
\end{cases}
\end{align}
thus eliminating all the parameters with the exception of the relaxation time 
$$ \alpha'= \frac{\chi \eta}{\sigma \beta} \alpha.$$
As mentioned at the beginning of this introduction, this paper focuses on the case $\alpha=0$ which describes a situation in which the organisms adapt instantly to the distribution of the chemoattractant $\phi$.
The equation for $\rho^\eps$ becomes elliptic and the effect of the initial condition $\rho_{in}$ is reduced to the mass condition \eqref{eq:mass}.
The goal of this paper is thus to investigate the asymptotic behavior as $\eps\to0$ of the solution of:
\begin{align}\label{eq:elliptic-parabolic-rescaled}
\begin{cases}
-\mathrm{div}(\rho^{\varepsilon}\nabla p^\eps)+ \mathrm{div}(\rho^{\varepsilon}\nabla\phi^{\varepsilon})=0,\quad p^\eps\in\pa f (\rho^{\varepsilon}), \quad \int_\Omega \rho^\eps(t,x) \, dx =1 \quad &\text{in}\quad(0,\infty)\times\Omega,\\
\partial_t\phi^{\varepsilon}- \Delta\phi^{\varepsilon}=  \varepsilon^{-2}(\rho^{\varepsilon}- \phi^{\varepsilon}), \quad &\text{in}\quad(0,\infty)\times\Omega.
\end{cases}
\end{align}
with the boundary conditions ($n$ being the outer unit normal vector of $\partial\Omega$)
\begin{equation}\label{eq:bc}
-\rho^\eps\na (p^\eps -\phi^\eps)\cdot n=0, \qquad \na\phi^\eps\cdot n=0 \qquad \mbox{ on } (0,\infty)\times\pa\Omega
\end{equation}
and the initial condition
$$
\phi^\eps(0,x) = \phi_{in}(x).
$$

This limit $\eps\to 0$  was previously studied in \cite{MR24} where it was proved that for well prepared initial condition, both $\phi^\eps$ and $\rho^\eps$ converge strongly to a characteristic function $\chi_{E(t)}$ and that the interface $\pa E(t)$ evolves according to the {\bf volume preserving mean-curvature flow}
\begin{align}\label{eq:VPMCF}
V=-\kappa+\Lambda\quad\text{on}\ \partial E(t)\cap \Omega,\qquad|E(t)|=1\quad\text{for }t>0, 
\end{align}
where  $V=V(t,x)$ is the outward normal velocity at a point $x$ of the boundary $\partial E(t)$, $\kappa=\kappa(t,x)$ is the mean curvature of $\partial E(t)$ and  $\Lambda=\Lambda(t)$ is a Lagrange multiplier associated to the constraint $|E(t)|=1$.

However, in \cite{MR24} equation \eqref{eq:VPMCF} was derived under an assumption on the convergence of the energy (see \eqref{eq:energy-assumption}) which we do not know how to establish a priori.
\emph{The goal of this paper is to establish a similar result rigorously without this assumption and thus get the first unconditional convergence result in this direction.}

A key idea in the paper - which we explain in the next section -  is that  \eqref{eq:elliptic-parabolic-rescaled} can be recast as a nonlocal Allen-Cahn equation. We will thus prove our result by adapting strategies developed for volume preserving Allen-Cahn equations to our model. This requires some delicate estimates that we derive by using the particular form of the nonlinearity \eqref{eq:f0}.

\medskip


\subsection{A non-local Allen-Cahn equation}\label{subsec:energy}

For a given $\phi$, the equation for the density $\rho$ describes the instantaneous relaxation of the organisms toward 
an equilibrium that balances the repulsive effect of the pressure $f(\rho)$ and the attractive effect of the chemo-attractant. 
It should be interpreted as describing the long time asymptotic of the corresponding evolution equation and is thus naturally  supplemented with non-negativity and mass constraint, which can be summarized by requiring that $\rho \in \mathcal{P}_{\textrm{ac}}(\Omega)$. Here $\mathcal{P}_{\textrm{ac}}(\Omega)$ denotes the set of probability measures  on $\Omega$ that are absolutely continuous with respect to the $d$-dimensional Lebesgue measure on $\Omega$  (a measure in $\rho\in\mathcal{P}_{\textrm{ac}}(\Omega)$ will always be identified with its density in this paper).
For a given potential $\phi=\phi^\eps(t,\cdot)$, the density $\rho=\rho^\eps (t,\cdot)$ is thus solution of 
\begin{equation}\label{eq:stat}
\begin{cases}
-\mathrm{div}(\rho \nabla p)+ \mathrm{div}(\rho \nabla\phi )=0, \quad p \in \pa f(\rho) &\text{in}\quad \Omega,\qquad \int_\Omega \rho(x)\, dx=1\\
-\rho [ \nabla p-\na \phi] \cdot n = 0 & \text{on} \quad  \pa \Omega.
 \end{cases}
\end{equation}
Solutions of \eqref{eq:stat} may not be unique, but the corresponding minimization problem 
\begin{equation}\label{eq:rhophi}
\rho_\phi = \mathrm{argmin}\,\left\{ \int_{\Omega}f(\rho)-\rho\phi\, dx\,;\, \rho \in \mathcal{P}_{\text{ac}}(\Omega)\right\}
\end{equation}
has a unique solution. This is immediate since when $f$ is given by \eqref{eq:f0}, we can write \eqref{eq:rhophi} as the minimization of the uniformly convex energy $\int_{\Omega}A \rho^2-\rho\phi dx$ on the convex set $K:=\{\rho\in L^2(\Omega)\,;\, 0\leq \rho\leq 1\mbox{ in } \Omega, \; \int_\Omega \rho\, dx=1\}$. To ensure that $K$ is not empty, we will always assume that
\begin{align}\label{eq:Omega1}
|\Omega|>1.
\end{align}
To justify the choice of the energy minimizer among all possible solutions of \eqref{eq:stat},  we point out that when a linear diffusion $\nu\Delta \rho$ is added to \eqref{eq:stat} (modeling some noise in the behavior of the organisms), it has a unique solution which is  also the unique global minimizer. Because the global minimizer is stable with respect to the limit $\nu\to0$, our choice for $\rho_\phi$ can be seen as a consequence of a natural vanishing noise approximation.

Important facts about the  minimization problem \eqref{eq:rhophi} will be recalled in Proposition \ref{prop:well-posed-min-prob}, but for now, we just note that it allows us to define a map
$\phi \mapsto \rho_\phi$
which identifies the density distribution of the organisms for a given distribution of the chemo-attractant.
With this notation, we can rewrite \eqref{eq:elliptic-parabolic-rescaled} as a single (nonlocal) equation
\begin{align}\label{eq:phiepsilon}
\begin{cases}
\partial_t\phi^{\varepsilon}-\Delta\phi^{\varepsilon}=\varepsilon^{-2}\left(\rho_{\phi^{\varepsilon}}- \phi^{\varepsilon}\right) \quad &\text{in}\quad(0,\infty)\times\Omega,\\
\nabla\phi^{\varepsilon}\cdot n=0 \quad &\text{on}\quad(0,\infty)\times\partial\Omega, \\
\phi^{\varepsilon}(0,\cdot)=\phi^{\varepsilon}_{\textrm{in}} \quad &\text{in}\quad \Omega.
\end{cases}
\end{align}

\medskip


This definition of $\rho_\phi$ is also consistent with the natural energy structure of the PKS model. 
Indeed, we recall that   the Parabolic-Parabolic system \eqref{eq:elliptic-parabolic-rescaled1} is associated with the energy
$$
\mathscr E_\eps (\rho,\phi) := \frac{1}{\varepsilon}\int_{\Omega}\left(f(\rho)+\left(\frac12-A\right)\rho-\rho\phi+\frac{1}{2}\phi^2\right)dx+\frac{\varepsilon}{2}\int_{\Omega}|\nabla\phi|^2\, dx .
$$
Because of the mass conservation property, the term $\left(\frac12-A\right)\rho$ only changes the energy by a constant, but it will play an important  role later on. 
Not surprisingly, the Elliptic-Parabolic system \eqref{eq:elliptic-parabolic-rescaled} is in turn associated with the  energy functional
\begin{equation}\label{eq:energy2}
\mathscr{J}_{\varepsilon}(\phi) = 
\inf_{\rho\in\mathcal{P}_{\text{ac}}(\Omega)}\left\{\frac{1}{\varepsilon}\int_{\Omega}\left(f(\rho)+\left(\frac12-A\right)\rho-\rho\phi+\frac{1}{2}\phi^2\right)dx+\frac{\varepsilon}{2}\int_{\Omega}|\nabla\phi|^2dx\right\}.
\end{equation}
The construction of a solution to \eqref{eq:elliptic-parabolic-rescaled} satisfying the appropriate energy inequality was carried out in \cite{MR24} using the gradient flow structure of the equation. These results will be recalled in Proposition \ref{prop:well-posed-phi-epsilon}.

The study of the singular limit $\eps\to 0$ in \cite{MR24} (see also \cite{M23,KMW24}) relies on the following observation: 
A simple arithmetic manipulation (complete the square) allows us to rewrite the energy as follows:
\begin{equation}\label{eq:energy3}
\mathscr{J}_{\varepsilon}(\phi):=\inf_{\rho\in\mathcal{P}_{\text{ac}}(\Omega)}\left\{\frac{1}{\varepsilon}\int_{\Omega}\left(W(\rho)+\frac{1}{2}(\rho-\phi)^2\right)dx+\frac{\varepsilon}{2}\int_{\Omega}|\nabla\phi|^2dx\right\},
\end{equation}
where
\[
W(\rho):=f(\rho)+\left(\frac12-A\right)\rho-\frac{1}{2}\rho^2.
\]
This potential $W$ is defined for $\rho\geq 0$, but we will extend its definition to $\R$ by setting $W(\rho)=+\infty$ for $\rho<0$. The definition of $f$, \eqref{eq:f0},  leads to:
\[
W(\rho)=\begin{cases}
\left(\frac{1}{2}-A\right)\left(\rho-\rho^2\right) \qquad\qquad &\text{if}\quad\rho\in[0,1],\\
+\infty \qquad\qquad &\text{otherwise.}
\end{cases}
\]
\begin{figure}[tbp]
	\begin{center}
            \includegraphics[height=6cm]{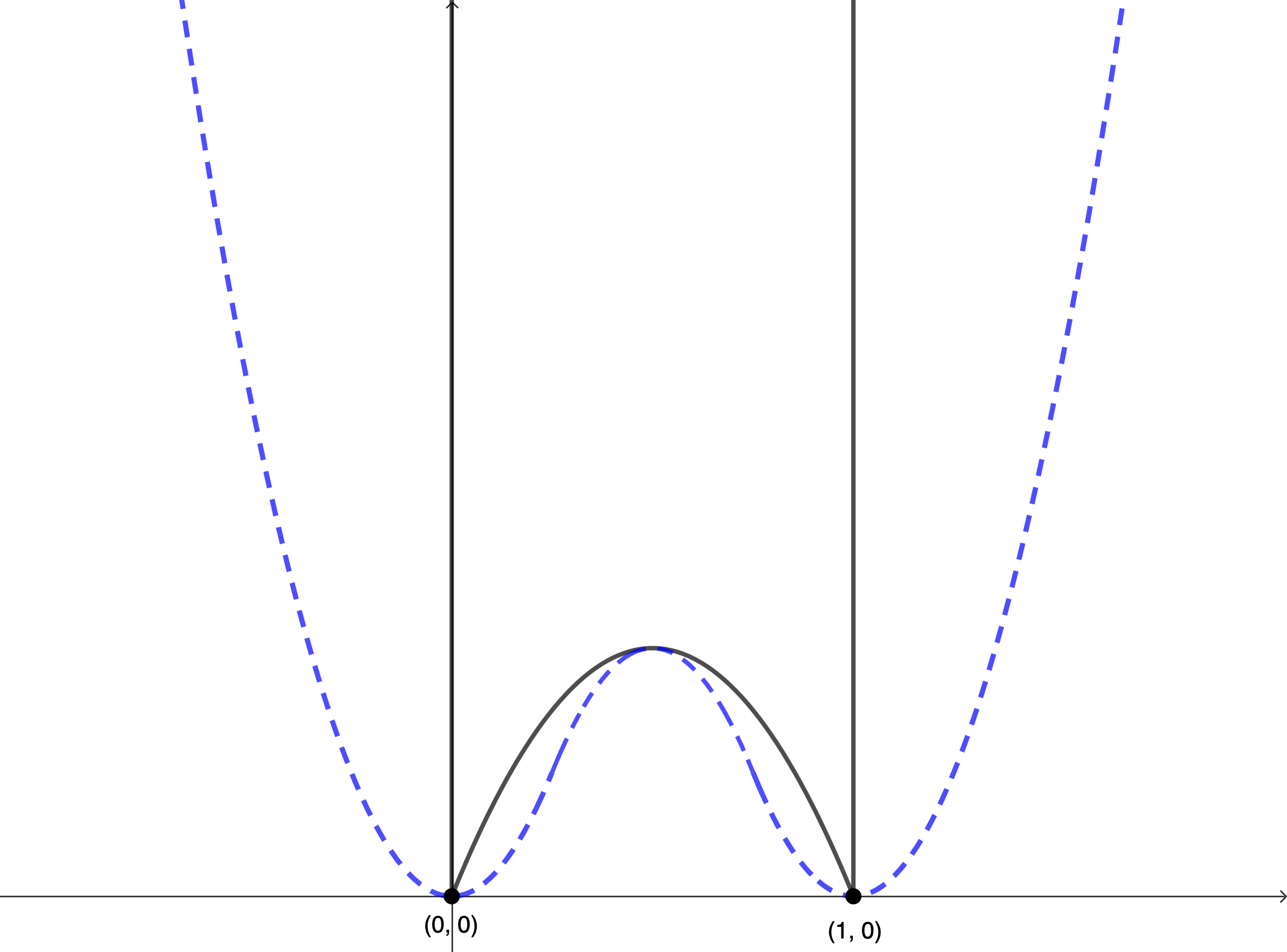}
		\vskip 0pt
		\caption{The double-well potentials $W(\rho)$ (solid line) and $\overline W(\phi)$ (dashed line).}
        \label{fig:double-well}
	\end{center}
\end{figure}
When $A\geq \frac  1 2$, this function $W$ is convex but when $A<\frac 1 2 $, it
is a double-well potential  (see Figure \ref{fig:double-well})  which satisfies
$$
W(\rho)\geq 0\quad  \mbox{ for all } \rho\in \R\,, \qquad W(\rho)=0 \Leftrightarrow \rho = 0 \mbox{ or } 1.
$$
From now on, we will thus assume that the nonlinearity $f(\rho)$ is given by \eqref{eq:f0} with
\begin{equation}\label{eq:A}
A\in\left(0,\frac 1 2\right).
\end{equation}
This ensures that the diffusion at low density is weaker than the attractive force when $\eps\ll1$ and is the key to observing  phase-separation phenomena (when $A\geq \frac1  2$ the limit $\eps\to0$ leads to constant densities).

We now see that the energy \eqref{eq:energy2} a structure reminiscent of the classical 
Allen-Cahn (or Modica-Mortola) functional, although the double-well potential acts on the density $\rho$ while the $H^1$ norm is that of the potential $\phi$.
We can make the connection a little bit more clear as follows:
First (to simplify the notations), we introduce the function $ g(\rho): = f(\rho) +\left(\frac12-A\right)\rho $, also extended by $+\infty$ for $\rho<0$.
The double-well potential can then  be  written as
$$ W(\rho) = g(\rho) -\frac 1 2\rho^2.$$
and we  introduce the dual potential
$$ \overline W(\phi) = \frac1  2 \phi^2 - g^\ast (\phi)$$
where $g^\ast $ denotes the Legendre transform\footnote{We recall that the Legendre transform is defined by
$$ g^\ast (\phi) = \sup_{\rho\in \R} \rho \phi - g(\rho).$$} of $g$.
With these notations, we can show the following:
\begin{proposition}\label{prop:AC}
\item[(i)] The function $\overline W$ is a $C^{1,1}$ double-well potential (see Figure \ref{fig:double-well}) satisfying 
$$
\overline W(\phi)\geq 0 \mbox{ for all } \phi \in \R, \qquad \overline W(\phi)=0 \Leftrightarrow \phi = 0 \mbox{ or } 1.
$$
\item[(ii)] The energy \eqref{eq:energy2} can be written as follows:
\begin{align}
\mathscr{J}_{\varepsilon}(\phi)
& :=
\inf_{\int \rho\, dx =1 }\left\{\frac{1}{\varepsilon}\int_{\Omega}g(\rho) + g^\ast (\phi) -\rho \phi \, dx\right\} +
\int_{\Omega}\frac{1}{\varepsilon}\overline W(\phi) + 
\frac{\varepsilon}{2} |\nabla\phi|^2dx.\label{eq:Jee}
\end{align}
\item[(iii)] For all $\phi\in L^2(\Omega)$,  the solution of the minimization problem  \eqref{eq:rhophi} is given by
$
\rho_\phi = {g^\ast}'(\phi+\ell) .
$
for some Lagrange multiplier $\ell=\ell(\phi)\in \R$.
\item[(iv)] Equation \eqref{eq:phiepsilon} can be rewritten as
\begin{equation}\label{eq:ACN}
\eps \pa_t \phi^\eps - \eps \Delta \phi ^\eps=-\eps^{-1}\overline W'(\phi^\eps) + \eps^{-1} \left( {g^\ast }'(\phi^\eps+\ell^\eps) - {g^\ast }'(\phi^\eps) \right)
\end{equation}
where $\ell^\eps(t)$ is a Lagrange multiplier defined by the constraint $\int_\Omega  {g^\ast }'(\phi^\eps(x,t)+\ell^\eps(t))\, dx =1$.
\end{proposition}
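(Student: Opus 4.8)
The plan is to treat all four claims as consequences of the convex Fenchel duality between $g$ and $g^\ast$, so I would first set up that backbone. Since $f$ is given by \eqref{eq:f0} with $A>0$, the function $g(\rho)=f(\rho)+(\frac12-A)\rho$ equals $A\rho^2+(\frac12-A)\rho$ on $[0,1]$ and $+\infty$ elsewhere; in particular $g$ is proper, lower semicontinuous and strictly convex on $\R$, so the Fenchel--Young machinery applies. I would then compute $g^\ast$ explicitly, the key output being that its derivative ${g^\ast}'$ is the \emph{clamping} function equal to $0$ for $\phi\le\frac12-A$, to $\tfrac{\phi-(\frac12-A)}{2A}$ for $\phi\in(\frac12-A,\frac12+A)$, and to $1$ for $\phi\ge\frac12+A$. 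This function is nondecreasing, valued in $[0,1]$, and Lipschitz with constant $\frac1{2A}$, so $g^\ast\in C^{1,1}(\R)$. This single computation drives everything else.

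For (i), I would use the identity $\overline W(\phi)=\frac12\phi^2-g^\ast(\phi)=\inf_{\rho}\{\frac12(\phi-\rho)^2+W(\rho)\}$, obtained by inserting $g=W+\frac12(\cdot)^2$ into the definition of $g^\ast$; that is, $\overline W$ is the Moreau envelope of $W$. Nonnegativity and the location of its zeros then follow at once from $W\ge0$ and $W(\rho)=0\Leftrightarrow\rho\in\{0,1\}$: a vanishing $\overline W(\phi)$ forces $\rho=\phi$ with $W(\phi)=0$. The $C^{1,1}$ regularity is inherited from $g^\ast$, and the double-well shape can be read off the explicit formula, namely $\frac12\phi^2$ near $0$, $\frac12(\phi-1)^2$ near $1$, and a concave cap on $(\frac12-A,\frac12+A)$ where $\overline W''=1-\frac1{2A}<0$.

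The heart of the matter is (iii). Since the mass $\int\rho\,dx$ is fixed, minimizing \eqref{eq:rhophi} is equivalent to minimizing $\int_\Omega g(\rho)-\rho\phi\,dx$ over $\{\int\rho\,dx=1\}$, the bounds $0\le\rho\le1$ being encoded in $g$. I would introduce a multiplier $\ell$ for the mass constraint and minimize the associated Lagrangian pointwise: $\rho(x)$ must maximize $\rho(\phi(x)+\ell)-g(\rho)$, which by Fenchel--Young is exactly $\rho(x)={g^\ast}'(\phi(x)+\ell)$. The only genuine analytic point is the existence of an admissible $\ell$: because ${g^\ast}'$ is continuous, nondecreasing and valued in $[0,1]$, the map $\ell\mapsto\int_\Omega{g^\ast}'(\phi+\ell)\,dx$ is continuous and nondecreasing, tending to $0$ as $\ell\to-\infty$ and to $|\Omega|$ as $\ell\to+\infty$; since $|\Omega|>1$ by \eqref{eq:Omega1}, the intermediate value theorem furnishes $\ell$ with $\int_\Omega{g^\ast}'(\phi+\ell)\,dx=1$, so that $\rho_\phi:={g^\ast}'(\phi+\ell)\in\mathcal{P}_{\mathrm{ac}}(\Omega)$. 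Uniqueness of the minimizer (already guaranteed by strict convexity in \eqref{eq:rhophi}) then identifies this candidate with $\rho_\phi$. I expect this multiplier step---justifying the optimality conditions for the nonsmooth $g$ and the solvability of the scalar constraint equation, which is precisely where $|\Omega|>1$ is used---to be the main and essentially only obstacle.

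Parts (ii) and (iv) are then formal. For (ii) I would insert $\frac12\phi^2=g^\ast(\phi)+\overline W(\phi)$ into \eqref{eq:energy2}, rewrite the integrand as $[g(\rho)+g^\ast(\phi)-\rho\phi]+\overline W(\phi)$, and pull the $\rho$-independent terms $\frac1\eps\int_\Omega\overline W(\phi)\,dx+\frac\eps2\int_\Omega|\na\phi|^2\,dx$ out of the infimum, yielding \eqref{eq:Jee}. For (iv), differentiating $\overline W=\frac12\phi^2-g^\ast$ gives $\overline W'(\phi)=\phi-{g^\ast}'(\phi)$, so that $-\overline W'(\phi)+\big({g^\ast}'(\phi+\ell)-{g^\ast}'(\phi)\big)={g^\ast}'(\phi+\ell)-\phi=\rho_\phi-\phi$ by (iii); multiplying the first line of \eqref{eq:phiepsilon} by $\eps$ turns its right-hand side $\eps^{-2}(\rho_{\phi^\eps}-\phi^\eps)$ into $\eps^{-1}(\rho_{\phi^\eps}-\phi^\eps)$, which is exactly the right-hand side of \eqref{eq:ACN}, with $\ell^\eps$ fixed by the same constraint $\int_\Omega{g^\ast}'(\phi^\eps+\ell^\eps)\,dx=1$.
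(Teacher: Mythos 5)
Your proposal is correct, and its overall backbone coincides with the paper's proof in Appendix~\ref{app:W}: the Moreau-envelope identity $\overline W(\phi)=\inf_{\rho}\{W(\rho)+\tfrac12(\rho-\phi)^2\}=\tfrac12\phi^2-g^\ast(\phi)$ for (i), completing the square with $\tfrac12\phi^2=g^\ast(\phi)+\overline W(\phi)$ and pulling the $\rho$-independent terms out of the infimum for (ii), and the algebraic identity $\overline W'(\phi)=\phi-{g^\ast}'(\phi)$ combined with (iii) for (iv). The one place where you genuinely diverge is (iii): the paper takes the (already known) minimizer $\rho_\phi$ and invokes ``classical arguments'' from convex optimization to produce a multiplier $\ell$ with $\phi+\ell\in\pa g(\rho_\phi)$, i.e.\ $\rho_\phi={g^\ast}'(\phi+\ell)$, whereas you run the argument in the opposite direction: you construct $\ell$ first, by noting that $\ell\mapsto\int_\Omega{g^\ast}'(\phi+\ell)\,dx$ is continuous, nondecreasing, and interpolates between $0$ and $|\Omega|>1$ (so the intermediate value theorem applies), then verify by Fenchel--Young that the candidate ${g^\ast}'(\phi+\ell)$ minimizes the Lagrangian, hence the constrained problem, and finally identify it with $\rho_\phi$ by uniqueness. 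Your version is self-contained where the paper defers to standard theory, and it has the merit of making explicit exactly where the hypothesis \eqref{eq:Omega1} enters; the paper's version is shorter and does not need the solvability discussion because it starts from the existing minimizer. Both are sound, and the remaining parts of your argument (the dominated-convergence justification of the limits of $\int_\Omega{g^\ast}'(\phi+\ell)\,dx$, the attainment of the infimum in the Moreau envelope when locating the zeros of $\overline W$, and the concavity computation $\overline W''=1-\tfrac1{2A}<0$ on the middle interval, which uses $A<\tfrac12$) are correct details that the paper states more tersely or reads off Figure~\ref{fig:double-well}.
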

The proof of this proposition is straightforward and included in Appendix \ref{app:W} for the reader's sake.
For future reference, we note that we have the explicit formula
\begin{equation}\label{eq:gg}
g(\rho) := 
\begin{cases}
+\infty & \mbox{ if } \rho<0\\
A\rho^2 + (\frac 1 2 -A) \rho & \mbox{ if } \rho\in(0,1)\\
+\infty & \mbox{ if } \rho>1
\end{cases}
\end{equation}
and (using the definition of $g$ and the fact that $\phi \in \pa g(\rho) \Leftrightarrow \rho \in \pa g^\ast (\phi)$),
we find that 
$g^\ast $ is a $C^{1,1}$ function whose derivative is given by the piecewise linear function
\begin{equation}\label{eq:gp}
{g^\ast }' (\phi) = \begin{cases}
0 & \mbox{ if } \phi \leq \frac 1 2-A\\
\frac{1}{2A} ( \phi+A-\frac 1 2) & \mbox{ if } \frac 1 2-A\leq \phi \leq \frac 1 2+A \\
1 & \mbox{ if } \phi \geq \frac 1 2 +A.
\end{cases}
\end{equation}


In the form \eqref{eq:ACN}, it is now clear that our equation has the form of a Allen-Cahn type equation with a $C^{1,1}$ double-well potential $\overline W$ and a ``forcing" term 
$$G^\eps(t,x)=\eps^{-1}\left(  {g^\ast }'\left(\phi^\eps(t,x)+\ell^\eps(t)) - {g^\ast }'(\phi^\eps(t,x)\right)\right) $$ 
which arises because of the density constraint $\int \rho\, dx=1$ (this term vanishes when the Lagrange multiplier $\ell$ is zero).
Unlike the classical volume-preserving Allen-Cahn equation, this term does not directly enforce a condition on the integral of $\phi$, but asymptotically (when $\eps\ll1$) we will see that the effect is the same.
The singular  limit $\eps\to0$ of the Allen-Cahn equation with forcing term was studied in detailed in \cite{MR11} under the condition
$$\sup_{\eps>0} \int_0^T \int_\Omega \frac 1 \eps |G^\eps(t,x)|^2\, dx\, dt <\infty.$$ 
Proving that such a bound holds will be the most important step in our proof (see Proposition \ref{prop:L2-force}) and will requires a delicate control of the Lagrange multiplier $\ell^\eps(t)$.
But once this is proved, we will be able to apply well known techniques and derive the mean-curvature flow equation (and prove our main result Theorem \ref{thm:convergence-to-limit-flow}).

\subsection{$\Gamma$-convergence and conditional convergence result of \cite{MR24}}
All the terms in the energy \eqref{eq:Jee}
 are non-negative since  a function and its Legendre transform always satisfy
$$ g(\rho) + g^\ast (\phi) -\rho \phi \geq 0 \qquad \mbox{ for all } \rho,\phi \in \R$$
(with equality if and only if $\rho = {g^\ast }'(\phi)$).
So the first term in \eqref{eq:Jee}   controls the convergence of $\rho$ to ${g^\ast }'(\phi)$ while 
the second integral in \eqref{eq:Jee} is the classical Allen-Cahn energy (or Modica-Mortola functional) which is known to $\Gamma$-converge to the perimeter functional.
This gives the intuition behind the following proposition (see \cite{MR24,R25} for the proof):
\begin{proposition}[Theorem 2.3 in \cite{MR24}]
The functional $\mathscr{J}_{\varepsilon}$, given by the equivalent formulas \eqref{eq:energy2} or \eqref{eq:Jee},
$\Gamma$-converges with respect to the $L^1(\Omega)$-topology to the perimeter functional 
\begin{equation}\label{eq:per}
\mathscr J_0(\phi) = 
\begin{cases}
\displaystyle \gamma \int_\Omega |D\phi| =\gamma P(E) & \mbox{ if } \phi = \chi_E \in \mathrm{BV}(\Omega;\{0,1\})\\
+\infty & \mbox{ otherwise.}
\end{cases}
\end{equation}
with $\gamma:=\int_0^1\sqrt{2\overline{W}(s)}ds$.
\end{proposition}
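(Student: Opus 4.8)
The plan is to establish the two $\Gamma$-convergence inequalities separately, exploiting the splitting of the energy \eqref{eq:Jee} into the non-negative \emph{Legendre gap}
\[ \mathscr G_\eps(\rho,\phi):=\frac1\eps\int_\Omega \bigl(g(\rho)+g^\ast(\phi)-\rho\phi\bigr)\,dx\ge 0 \]
and the Modica--Mortola functional $\mathscr M_\eps(\phi):=\int_\Omega \frac1\eps\overline W(\phi)+\frac\eps2|\na\phi|^2\,dx$ built on the $C^{1,1}$ double-well $\overline W$ of Proposition \ref{prop:AC}(i). The lower bound comes from discarding $\mathscr G_\eps$; the upper bound comes from a recovery sequence for which $\mathscr M_\eps$ realizes the optimal transition profile while $\mathscr G_\eps$ is driven to zero.

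For the liminf inequality I take any $\phi^\eps\to\phi$ in $L^1(\Omega)$ with $\liminf_\eps\mathscr J_\eps(\phi^\eps)<\infty$ and pass to a subsequence attaining the liminf. Since every term is non-negative, $\mathscr J_\eps(\phi^\eps)\ge\mathscr M_\eps(\phi^\eps)$, which reduces the claim to the classical Modica--Mortola lower bound. The bound $\int_\Omega\overline W(\phi^\eps)\le C\eps\to0$, the coercivity of $\overline W$, and the fact that its only zeros are $0$ and $1$ force $\phi=\chi_E$ a.e.\ for some set $E$; then the pointwise Young inequality $\frac1\eps\overline W(\phi^\eps)+\frac\eps2|\na\phi^\eps|^2\ge\sqrt{2\overline W(\phi^\eps)}\,|\na\phi^\eps|=|\na(\Phi\circ\phi^\eps)|$, with $\Phi(t):=\int_0^t\sqrt{2\overline W(s)}\,ds$, together with the lower semicontinuity of the total variation gives $\liminf_\eps\mathscr J_\eps(\phi^\eps)\ge\int_\Omega|D(\Phi\circ\phi)|=\Phi(1)\,P(E)=\gamma P(E)$, using $\Phi(0)=0$ and $\Phi(1)=\gamma$.

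For the limsup inequality, a density and diagonal argument (approximating a general finite-perimeter set so that the perimeter converges, and using the metrizability of $\Gamma$-convergence) reduces matters to sets $E$ with smooth boundary and $|E|=1$. For such $E$ I use the one-dimensional optimal profile $q$ solving $q'=\sqrt{2\overline W(q)}$ and set $\phi^\eps(x):=q\bigl(\delta_\eps(x)/\eps\bigr)$, with $\delta_\eps$ the signed distance to $\pa E^\eps$, suitably truncated outside an $O(\eps)$-neighborhood of $\pa E^\eps$; the equipartition computation gives $\mathscr M_\eps(\phi^\eps)\to\gamma P(E)$ and $\phi^\eps\to\chi_E$ in $L^1$. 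The new point is to annihilate $\mathscr G_\eps$: taking the internal density to be the pointwise minimizer $\rho^\eps={g^\ast}'(\phi^\eps)$ (Proposition \ref{prop:AC}(iii), together with the Fenchel equality and the identity $\inf_\rho[W(\rho)+\frac12(\rho-\phi)^2]=\overline W(\phi)$) makes the integrand of $\mathscr G_\eps$ vanish identically, so only the mass constraint $\int_\Omega\rho^\eps\,dx=1$ remains to be arranged.

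This mass constraint is where the main difficulty lies, and it is the feature absent from the usual Modica--Mortola analysis. Since $\int_\Omega{g^\ast}'(\phi^\eps)\,dx=|E^\eps|+O(\eps)$, I would first choose the volume of the perturbed set $E^\eps$ (a normal $O(\eps)$-perturbation of $E$, so that $E^\eps\to E$ and $P(E^\eps)\to P(E)$) to absorb the $O(\eps)$ layer contribution, and then correct the residual $o(\eps)$ mass by a uniform bulk adjustment of $\rho^\eps$, whose cost in $\mathscr G_\eps$ is $\frac1\eps\cdot o(\eps)=o(1)$. This is feasible precisely because $|E|=1$: indeed, uniform convexity of $g$ shows that $\mathscr G_\eps(\rho^\eps,\phi^\eps)\to0$ and $\phi^\eps\to\chi_E$ in $L^1$ together force $|E|=1$, so the volume constraint of the limit flow \eqref{eq:VPMCF} is inherited exactly through $\mathscr G_\eps$. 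I expect the only genuinely delicate point to be this quantitative control of the correction --- equivalently, of the Lagrange multiplier $\ell^\eps\to0$ --- which is the stationary counterpart of the time-dependent bound in Proposition \ref{prop:L2-force}.
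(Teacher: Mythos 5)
Your decomposition into the nonnegative Legendre gap $\mathscr G_\eps$ and the Modica--Mortola part is exactly the structure this paper points to: note that the paper gives no proof of its own (it defers to \cite{MR24,R25}), but the intuition paragraph preceding the proposition is precisely this splitting, and both halves of your argument are sound --- the liminf inequality by discarding $\mathscr G_\eps\ge 0$ and invoking the classical Modica--Mortola bound for the $C^{1,1}$ well $\overline W$, and the limsup by the optimal-profile ansatz with $\rho^\eps={g^\ast}'(\phi^\eps)$, which kills the gap integrand pointwise so that only the mass constraint survives. Your handling of that constraint can be made cleaner than the two-step ``absorb $O(\eps)$, then correct the residual $o(\eps)$'' plan (whose second step needs the residual to be $o(\eps)$, a point you do not fully justify, and which matters because the gap cost of bulk mass is \emph{linear}, not quadratic: $g(\sigma)\ge(\tfrac12-A)\sigma$): for smooth $\partial E$ the map $t\mapsto\int_\Omega {g^\ast}'\bigl(q(\delta_t/\eps)\bigr)\,dx$, where $E_t$ is the normal perturbation of $E$ and $\delta_t$ its signed distance, is continuous and sweeps an interval of length of order $|t|\,P(E)\gg\eps$ around $1$, so the intermediate value theorem gives $t_\eps=O(\eps)$ for which the mass is \emph{exactly} $1$ and no bulk correction is needed at all.

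The one substantive issue is not with your proof but with the statement as transcribed here, and your proof makes it visible: a recovery sequence can only be built when $|E|=1$, and indeed none exists otherwise. If $\phi^\eps\to\chi_E$ in $L^1(\Omega)$, then $\int_\Omega{g^\ast}'(\phi^\eps)\,dx\to|E|$ by the Lipschitz continuity of ${g^\ast}'$, while the uniform convexity of $g$ on $[0,1]$ gives $g(\rho)+g^\ast(\phi)-\rho\phi\ge A\,|\rho-{g^\ast}'(\phi)|^2$, whence by Jensen's inequality, for every admissible $\rho$ with $\int_\Omega\rho\,dx=1$,
\[
\mathscr G_\eps(\rho,\phi^\eps)\ \ge\ \frac{A}{|\Omega|\,\eps}\Bigl(1-\int_\Omega {g^\ast}'(\phi^\eps)\,dx\Bigr)^{2}\ \longrightarrow\ +\infty \qquad\text{if } |E|\neq 1.
\]
So the $\Gamma$-limit equals $+\infty$ on characteristic functions with $|E|\neq1$: the limit functional \eqref{eq:per} should carry the constraint $|E|=1$ (which is presumably how Theorem 2.3 reads in \cite{MR24}), and your final observation --- that $\mathscr G_\eps\to0$ together with $\phi^\eps\to\chi_E$ forces $|E|=1$ --- is exactly the proof of that fact. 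With the constraint added to $\mathscr J_0$, your argument is complete in outline; without it, no argument could close the limsup inequality, since the statement as literally written is false.
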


This result will not play any role our  analysis, but it justifies heuristicaly  the presence of the mean-curvature operator in the limit $\eps\to0$. 
The derivation of the mean-curvature flow equation from \eqref{eq:phiepsilon} was established in \cite{MR24} under the assumption of convergence of the energy. More precisely, the main result of  \cite{MR24} states that given a subsequence  $\phi^{\eps_n}$ which converges to $\phi^0=\chi_{E(t)}$ (such a subsequence exists), if we assume that
\begin{align}\label{eq:energy-assumption}
\lim_{n\to\infty}\int_0^T\mathscr{J}_{\varepsilon_n}(\phi^{\eps_n}) dt=\int_0^T\mathscr{J}_{0}(\phi^0)dt,
\end{align}
then the evolution of the interface $\pa E(t)$ is described by the volume preserving mean-curvature flow \eqref{eq:VPMCF}.

Similar conditional results were obtained in related framework in \cite{KMW24,M23,MR24,R25} and in much broader contexts in \cite{JKM16,O98,EO15,LO16}. 
Since the $\Gamma$-convergence guarantees that
$$\liminf_{n\to\infty}\int_0^T\mathscr{J}_{\varepsilon_n}(\phi^{\eps_n}) dt\geq \int_0^T\mathscr{J}_{0}(\phi^0)dt,
$$
we see that \eqref{eq:energy-assumption} holds if and only if there is no loss of energy in the limit. Such energy losses typically happen due to overlapping or disappearing interfaces in the limit process.
In case of boundaries that overlap, for example, the left-hand side of \eqref{eq:energy-assumption} counts interfaces (at least) twice, while the right-hand side counts the interface only once. 

In this paper, we will prove an unconditional convergence result. However, because the phenomena described above cannot be discounted, we will rely on a weaker notion of solution than that used in \cite{MR24}:
Following the work of Brakke  \cite{B78} and Ilmanen \cite{I93} (see also \cite{MR08,MR11,T17,T23} for more recent work in this direction), we use a general notion of solutions using  \emph{integral varifolds}. 
 More precisely, we will characterize the evolution of certain {\it energy measures}.
 Ideally, we expect these measures to coincide with the surface area measures associated with the interface separating regions with density $0$ and $1$. However, in general these energy measures may be supported on hidden boundaries or 
take the multiplicity of the interface  into account.

As a final comment, we point out that the result of  \cite{MR24} was proved for a general nonlinearity $f(\rho)$, while we use the particular form of $f$ given by \eqref{eq:f0}.

\medskip

\section{Preliminaries and main results}\label{sec:main-results}
\subsection{Well-posedness}\label{subsec:well-posedness}
We recall the following proposition (proved in \cite{MR24} for smooth $f$ and in \cite{R25} for more general nonlinearities that include our function \eqref{eq:f0}):
\begin{proposition}\label{prop:well-posed-min-prob}
Suppose that $f$ is given by \eqref{eq:f0} and that \eqref{eq:Omega1} holds.
The following statements  hold:
\item[(i)] For all $\phi\in L^2(\Omega)$, 
there exists a unique  $\rho_{\phi}\in\mathcal{P}_{\textrm{ac}}(\Omega)$ solution of \eqref{eq:rhophi}. It satisfies $\|\rho_{\phi}\|_{L^{\infty}(\Omega)}\leq1$.
\item[(ii)] The map $\phi \mapsto \rho_\phi$ is Lipschitz from $L^2(\Omega)\to L^2(\Omega)$ with Lipschitz constant $A^{-1}$.
\end{proposition}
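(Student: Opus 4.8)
The plan is to handle (i) and (ii) separately, in both cases using that, for $f$ as in \eqref{eq:f0}, the functional in \eqref{eq:rhophi} reduces on $\mathcal{P}_{\text{ac}}(\Omega)$ to the strongly convex quadratic energy
\[
\mathcal{E}_\phi(\rho) := \int_\Omega A\rho^2 - \rho\phi\, dx
\]
minimized over the convex set $K = \{\rho\in L^2(\Omega)\,;\, 0\le\rho\le1,\ \int_\Omega \rho\,dx = 1\}$ introduced after \eqref{eq:rhophi}.

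For (i) I would run the direct method of the calculus of variations. First, $K$ is nonempty: by \eqref{eq:Omega1} the constant $\rho\equiv 1/|\Omega|$ lies in $K$. The set $K$ is convex, closed, and bounded in $L^2(\Omega)$ (since $0\le\rho\le1$ gives $\|\rho\|_{L^2}^2\le|\Omega|$), hence weakly sequentially compact by reflexivity. The functional $\mathcal{E}_\phi$ is continuous and convex on $L^2(\Omega)$, therefore weakly lower semicontinuous, so a minimizing sequence converges weakly (up to a subsequence) to a minimizer $\rho_\phi\in K$. Uniqueness is immediate from the strict (indeed $2A$-strong) convexity of $\rho\mapsto A\int_\Omega\rho^2$, and the bound $\|\rho_\phi\|_{L^\infty}\le1$ is built into membership in $K$.

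For (ii) I would use the first-order optimality (variational inequality) characterization of the constrained minimizer rather than the explicit formula of Proposition \ref{prop:AC}(iii). Since $K$ is convex and $\mathcal{E}_\phi$ is differentiable with $\mathcal{E}_\phi'(\rho)=2A\rho-\phi$, the minimizer $\rho_\phi$ satisfies
\[
\int_\Omega (2A\rho_\phi-\phi)(\rho-\rho_\phi)\,dx \ge 0 \qquad\text{for all } \rho\in K.
\]
Given $\phi_1,\phi_2\in L^2(\Omega)$ with minimizers $\rho_1,\rho_2$, I would test the inequality for $\phi_1$ with $\rho=\rho_2$ and the one for $\phi_2$ with $\rho=\rho_1$, then add. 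Writing $w=\rho_1-\rho_2$ and $\psi=\phi_1-\phi_2$, the cross terms combine into
\[
2A\int_\Omega w^2\,dx \le \int_\Omega \psi\,w\,dx \le \|\psi\|_{L^2}\|w\|_{L^2},
\]
whence $\|\rho_1-\rho_2\|_{L^2}\le \tfrac{1}{2A}\|\phi_1-\phi_2\|_{L^2}$, which in particular gives the stated Lipschitz constant $A^{-1}$.

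The only delicate choice is which characterization to use in (ii). The explicit representation $\rho_\phi={g^\ast}'(\phi+\ell(\phi))$ makes ${g^\ast}'$ manifestly $\tfrac{1}{2A}$-Lipschitz by \eqref{eq:gp}, but it introduces the Lagrange multiplier $\ell(\phi)$, whose dependence on $\phi$ is precisely the quantity that proves hard to control later in the paper; a naive estimate through this formula would require bounding $|\ell(\phi_1)-\ell(\phi_2)|$. The variational-inequality argument sidesteps the multiplier entirely, since the mass and pointwise constraints enter only through the admissibility $\rho_1,\rho_2\in K$ of the test functions. I therefore expect the substantive point to be this testing-and-adding step together with verifying the optimality condition in the stated form on the constrained set $K$; the remainder of the argument is routine.
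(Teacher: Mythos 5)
Your proof is correct, and in outline it follows the paper: existence and uniqueness by the direct method (the paper extracts a weak-$\ast$ limit in $L^\infty$ of a minimizing sequence, you use weak compactness of the closed, bounded, convex set $K$ in $L^2$; the two are interchangeable here because of the constraint $0\le\rho\le1$), and the Lipschitz bound via first-order optimality on the constrained set, with neither argument ever touching the Lagrange multiplier $\ell(\phi)$. The one substantive difference is in part (ii). The paper writes the variational inequality only at $\rho_1$, namely $\int_\Omega(f'(\rho_1)-\phi_1)(\rho_2-\rho_1)\,dx\ge0$, and combines it with the pointwise strong-convexity inequality $f(v_2)-f(v_1)-f'(v_1)(v_2-v_1)\ge A|v_2-v_1|^2$ and the minimality of $\rho_2$ for $\phi_2$; this asymmetric argument produces the constant $A^{-1}$ of the statement. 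You instead write the variational inequality at both minimizers, test each against the other and add, i.e.\ you use the strong monotonicity of the gradient $\rho\mapsto 2A\rho-\phi$; this symmetric version is slightly cleaner and yields the sharper constant $\tfrac{1}{2A}$, which implies the stated $A^{-1}$ and is in fact the natural constant: by Proposition \ref{prop:AC}(iii) and \eqref{eq:gp}, $\rho_\phi={g^\ast}'(\phi+\ell)$ with ${g^\ast}'$ exactly $\tfrac{1}{2A}$-Lipschitz. Incidentally, your Cauchy--Schwarz step also makes transparent that the correct norm on the right-hand side is $\|\phi_1-\phi_2\|_{L^2(\Omega)}$; the paper's own proof writes an $L^\infty$ norm at the corresponding point, which is evidently a typo given that the statement asserts Lipschitz continuity from $L^2(\Omega)$ to $L^2(\Omega)$.
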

Next, we recall that
the gradient flow structure of  \eqref{eq:phiepsilon} was used in \cite[Section 4]{MR24}  to prove the existence of a solution when $f$ is a smooth function. For the type of  singular $f$ we are considering here this analysis was extended in \cite{R25}.
We thus state the following well-posedness result without proof:
\begin{proposition}\label{prop:well-posed-phi-epsilon}
Suppose that $f$ is given by \eqref{eq:f0} and that \eqref{eq:Omega1} holds. Then, for any $\varepsilon>0$ and $\phi_{in}^{\varepsilon}\in H^1(\Omega;\R_+)$, there exists a unique nonnegative strong solution $\phi^{\varepsilon}$ to \eqref{eq:phiepsilon} such that
\begin{align*}
\phi^{\varepsilon}\in H^1_{loc}(0,\infty;L^2(\Omega))\cap L^{\infty}_{loc}(0,\infty;H^1(\Omega))\cap L^2_{loc}(0,\infty;H^2(\Omega))
\end{align*}
and that the following energy dissipation inequality holds:
\begin{align}\label{eq:dissipation}
\mathscr{J}_{\varepsilon}(\phi^{\varepsilon}(T))+\frac 1 2 \int_0^T\int_{\Omega}\varepsilon|\partial_t\phi^{\varepsilon}|^2dxdt\leq\mathscr{J}_{\varepsilon}(\phi_{in}^{\varepsilon})\qquad\text{for any }T>0.
\end{align}
\end{proposition}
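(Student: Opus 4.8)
The plan is to exploit the gradient-flow structure of \eqref{eq:phiepsilon} and construct the solution by De~Giorgi's minimizing-movement (implicit Euler) scheme, which produces the energy dissipation inequality \eqref{eq:dissipation} automatically. The point that makes the whole argument work is Proposition~\ref{prop:well-posed-min-prob}: although the nonlinearity $\phi\mapsto\rho_\phi$ is defined only implicitly, through a constrained minimization involving the singular $f$, it is globally Lipschitz from $L^2(\Omega)$ to $L^2(\Omega)$ with constant $A^{-1}$ and satisfies $0\le\rho_\phi\le1$. This turns \eqref{eq:phiepsilon} into a semilinear heat equation with a bounded, globally Lipschitz $L^2$-nonlinearity, and is exactly what is needed for each step below. (A Banach fixed point for the mild formulation would also give existence and uniqueness quickly, but it does not directly deliver \eqref{eq:dissipation}, which is why I favor the variational scheme.)

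First I would recall that, by the computation underlying \eqref{eq:energy3}, equation \eqref{eq:phiepsilon} is the $L^2$-gradient flow of $\mathscr{J}_\varepsilon$ for the metric $\eps\|\cdot\|_{L^2}^2$, i.e. $\pa_t\phi=-\eps^{-1}\tfrac{\delta\mathscr{J}_\varepsilon}{\delta\phi}$ with $\tfrac{\delta\mathscr{J}_\varepsilon}{\delta\phi}=\eps^{-1}(\phi-\rho_\phi)-\eps\Delta\phi$ (the variation through $\rho_\phi$ dropping out by optimality). Accordingly, fix $T>0$, a step $\tau>0$ with $N\tau=T$, set $\phi_0=\phi_{in}^\eps$, and define iteratively
\[
\phi_{k+1}\in\argmin_{\phi\in H^1(\Omega)}\left\{\mathscr{J}_\varepsilon(\phi)+\frac{\eps}{2\tau}\|\phi-\phi_k\|_{L^2(\Omega)}^2\right\}.
\]
A minimizer exists by the direct method: all terms in \eqref{eq:Jee} are nonnegative, the gradient term makes the functional coercive and weakly lower semicontinuous on $H^1(\Omega)$, and the $L^2$-continuity of $\phi\mapsto\rho_\phi$ handles the potential terms under Rellich compactness. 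The Euler–Lagrange equation of this minimization is precisely the implicit Euler step $\tau^{-1}(\phi_{k+1}-\phi_k)-\Delta\phi_{k+1}=\eps^{-2}(\rho_{\phi_{k+1}}-\phi_{k+1})$, and comparing the minimizer with the competitor $\phi_k$ yields $\mathscr{J}_\varepsilon(\phi_{k+1})+\tfrac{\eps}{2\tau}\|\phi_{k+1}-\phi_k\|_{L^2}^2\le\mathscr{J}_\varepsilon(\phi_k)$. Summing telescopes to the discrete version of \eqref{eq:dissipation}, with the factor $\tfrac12$ appearing exactly as in the statement.

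I would then pass to the limit $\tau\to0$. The discrete energy inequality bounds the piecewise-linear interpolant uniformly in $L^\infty(0,T;H^1(\Omega))$ with time derivative uniformly bounded in $L^2(0,T;L^2(\Omega))$; Aubin–Lions gives strong $L^2(0,T;L^2(\Omega))$ convergence (up to a subsequence) to a limit $\phi^\eps$, and the Lipschitz bound of Proposition~\ref{prop:well-posed-min-prob}(ii) passes $\rho_{\phi^\tau}\to\rho_{\phi^\eps}$ to the limit, so that $\phi^\eps$ solves \eqref{eq:phiepsilon}; lower semicontinuity of $\mathscr{J}_\varepsilon$ and of the dissipation term then yield \eqref{eq:dissipation}. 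Writing the equation as $\pa_t\phi^\eps-\Delta\phi^\eps=h$ with $h:=\eps^{-2}(\rho_{\phi^\eps}-\phi^\eps)\in L^\infty(0,T;L^2(\Omega))$ (because $0\le\rho_{\phi^\eps}\le1$), linear maximal parabolic regularity with Neumann data $\phi_{in}^\eps\in H^1$ upgrades $\phi^\eps$ to the claimed class $H^1_{loc}(L^2)\cap L^\infty_{loc}(H^1)\cap L^2_{loc}(H^2)$. Nonnegativity is cleanest at the PDE level: since $\rho_{\phi^\eps}\ge0$, the function $\phi^\eps$ satisfies $\pa_t\phi^\eps-\Delta\phi^\eps+\eps^{-2}\phi^\eps=\eps^{-2}\rho_{\phi^\eps}\ge0$ with $\phi^\eps(0)\ge0$ and Neumann conditions, so the weak maximum principle gives $\phi^\eps\ge0$. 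Finally, uniqueness follows by Grönwall: for two solutions the difference $w$ solves $\pa_t w-\Delta w=\eps^{-2}(\rho_{\phi_1}-\rho_{\phi_2}-w)$, and testing against $w$ together with $\|\rho_{\phi_1}-\rho_{\phi_2}\|_{L^2}\le A^{-1}\|w\|_{L^2}$ gives $\tfrac{d}{dt}\|w\|_{L^2}^2\le 2\eps^{-2}A^{-1}\|w\|_{L^2}^2$, whence $w\equiv0$.

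The conceptual obstacle is that the nonlinear term is not a Nemytskii operator but is defined through a constrained variational problem with a hard constraint $\rho\le1$ and a mass constraint $\int\rho=1$, so none of the classical smooth semilinear theory applies directly; this is entirely resolved by Proposition~\ref{prop:well-posed-min-prob}. The remaining genuine technical point is the identification of the limit nonlinearity in the scheme, which relies on strong $L^2$ compactness together with the Lipschitz continuity of $\phi\mapsto\rho_\phi$. It is precisely the minimizing-movement construction, rather than a direct differentiation of $t\mapsto\mathscr{J}_\varepsilon(\phi^\eps(t))$ (which would be delicate given the low regularity of $\rho_{\phi^\eps}$), that delivers the dissipation inequality with its stated constant.
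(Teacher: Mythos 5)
Your proposal is correct and takes essentially the same approach as the paper: the paper states this proposition without proof, deferring to \cite{MR24} (Section 4) and \cite{R25}, which construct the solution exactly via the gradient-flow/minimizing-movement structure of $\mathscr{J}_{\varepsilon}$ that you exploit, with the discrete energy comparison yielding the dissipation inequality \eqref{eq:dissipation} with its factor $\tfrac12$. Your use of the $L^2$-Lipschitz bound of Proposition \ref{prop:well-posed-min-prob}(ii) to handle the implicitly defined nonlinearity $\phi\mapsto\rho_\phi$ (for the Euler--Lagrange step, the identification of the limit, and the Gr\"onwall uniqueness argument) is precisely the mechanism that makes this extension to the singular $f$ of \eqref{eq:f0} work.
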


\medskip


\subsection{Basic definitions}\label{subsec:defns-GMT-main-thm}
Before we state our main result, we briefly recall some standard definitions from geometric measure theory  (see \cite{S83} for more details):

We first recall that a Radon measure $\mu$ on $\Omega$ is \emph{$k$-rectifiable} if $\int_{\Omega}\eta\,  d\mu=\int_{M}\eta\, \theta \, d\mathcal{H}^k$ for any $\eta\in C_c(\Omega)$ for some countably $k$-rectifiable and $\mathcal{H}^k$-measurable set $M\subset\Omega$ and a function $\theta\in L^1_{loc}(\mathcal{H}^k\lfloor M;\mathbb{R}_{>0})$. We then write $\mu=\theta\mathcal{H}^k\lfloor M$.
We call $\theta$ the multiplicity and say that $\mu$ is \emph{$k$-integral} if the multiplicity is integer-valued $\mathcal{H}^k$-a.e. on $M$.

For a Radon measure $\mu$ on $\Omega$ and $x\in\Omega$, if there exist a $(d-1)$-dimensional linear subspace $T\subset\mathbb{R}^d$ and a number $\Theta>0$ such that
$$
\lim_{r\to0}\frac{1}{\omega_{d-1}r^{d-1}}\int_{\Omega}\zeta\left(\frac{y-x}{r}\right)\,d\mu(y)=\Theta\int_T\zeta\, d\mathcal{H}^{d-1}\qquad\text{for all }\zeta\in C_c(\mathbb{R}^d),
$$
we say that $\mu$ has the approximate tangent plane $T$, denoted by $T_x\mu$. Here, $\omega_{d-1}$ is the Lebesgue volume of the unit ball in $\mathbb{R}^{d-1}$. If a Radon measure $\mu$ on $\Omega$ is $(d-1)$-rectifiable, represented by $\mu=\theta\mathcal{H}^{d-1}\lfloor M$ with the above notations, then $\mu$ has the approximate plane at $x\in M$ and $\Theta=\theta$ at $\mathcal{H}^{d-1}$-a.e. $x\in M$.

For a $(d-1)$-rectifiable Radon measure $\mu=\theta\mathcal{H}^{d-1}\lfloor M$ on $\Omega$ with a locally finite, $(d-1)$-rectifiable, and $\mathcal{H}^{d-1}$-measurable set $M\subset\Omega$ and $\theta\in L^1_{loc}(\mathcal{H}^k\lfloor M;\mathbb{R}_{>0})$, we say that $h$ is \emph{a generalized mean curvature vector} if
\begin{equation}\label{eq:mcv}
\int_{\Omega}\mathrm{div}_{T_x\mu}g \, d\mu=-\int_{\Omega}h\cdot g \, d\mu\qquad\text{for any }g\in C^1_c(\Omega;\mathbb{R}^d),
\end{equation}
where $T_xM$ is the approximate tangent space of $M$ at $x$.
We recall that for a $k$-dimensional subspace $S$ of $\mathbb{R}^d$ with an orthonormal basis $\{e_1,\cdots,e_k\}$ and for a vector field $g\in C^1_c(\mathbb{R}^d;\mathbb{R}^d)$, we have $\mathrm{div}_{S}g:=\sum_{i=1}^k e_i\cdot\nabla_{e_i}g$.

\medskip

The notion of weak solution considered in this paper is based on the ideas introduced in the seminal work of Brakke \cite{B78} and Ilmanen \cite{I93} and requires the introduction of  the notion of $L^2$-flow (see \cite{MR08}) which describes the evolution of integral varifolds with square integrable generalized mean-curvature and square integrable generalized velocity.

\begin{defn}[$L^2$-flow \cite{MR08}]\label{def:L2-flow}
Let $\{\mu_t\}_{t>0}$ be a family of $(d-1)$-rectifiable Radon measures on $\Omega$ such that $\mu_t$ has a generalized mean curvature vector $h\in L^2(\mu_t;\mathbb{R}^d)$ for a.e. $t\in(0,\infty)$. We call $\{\mu_t\}_{t>0}$ an $L^2$-flow if there exist a constant $C>0$ and a vector field $v\in L^2_{loc}\left(0,\infty;L^2(\mu_t)^d\right)$ such that
\[
v(t,x)\perp T_x\mu_t\quad\text{for $\mu$-a.e. }(x,t)\in\Omega\times(0,\infty)
\]
and
\[
\left|\int_0^{\infty}\int_{\Omega}(\eta_t+\nabla\eta\cdot v)\, d\mu_t\, dt\right|\leq C\|\eta\|_{C^0((0,\infty)\times\Omega)}
\]
for any $\eta\in C^1_c((0,\infty)\times\Omega)$. Here, $d\mu:=d\mu_tdt$ and $T_x\mu_t$ is the approximate tangent space of $\mu_t$ at $x$. A vector field $v\in L^2_{loc}\left(0,\infty;L^2(\mu_t)^d\right)$ satisfying the above conditions is called a generalized velocity vector.
\end{defn}
We point out that any generalized velocity is (on a set of good points) uniquely determined by the evolution $\{\mu_t\}_{t>0}$ (see \cite{MR08}).

\medskip

\subsection{Main results}
Given a solution $\phi^\eps(t,x)$ of \eqref{eq:phiepsilon}  (or, equivalently \eqref{eq:ACN}), 
we introduce the Radon measure on $\Omega$ that expresses the total energy associated  tothe Modica-Mortola functional:
\[\mu_{t}^{\varepsilon}(\varphi):=\frac{1}{\gamma}\int_{\Omega}\left(\frac{\overline W(\phi^{\varepsilon}(t,x))}{\varepsilon}+\frac{\varepsilon|\nabla\phi^{\varepsilon}(t,x)|^2}{2}\right)\varphi(x)\, dx\quad\ \text{ for any }\varphi\in C_c(\Omega)
\]
with
$$\gamma:=\int_0^1\sqrt{2\overline{W}(s)}ds.$$
Our main result is as follows:
\begin{theorem}\label{thm:convergence-to-limit-flow}
    Suppose $d=2, 3$ and  \eqref{eq:Omega1} holds. Let $f$ be given by \eqref{eq:f0}, \eqref{eq:A} and let $\phi^{\varepsilon}$ be the solution to \eqref{eq:phiepsilon} with initial data $\phi_{\textrm{in}}^{\varepsilon}$, where the initial data $(\phi_{\textrm{in}}^{\varepsilon})_{\varepsilon>0}\subset H^1(\Omega;\R_+)$ satisfy $\sup_{\varepsilon>0}\mathscr{J}_{\varepsilon}(\phi_{\textrm{in}}^{\varepsilon})< +\infty$. 
    
    Then, there exists a subsequence of $\varepsilon\to0$ (still denoted by $\varepsilon\to0$) such that
\begin{itemize}
            \item[(A)] There exists a function $ \phi^0\in C^{0,s}_{loc}\left([0,\infty);L^1(\Omega)\right)$ (for all $s\in\left(0,\frac{1}{2}\right)$) with
        \[
        \phi^0\in   L^{\infty}_{loc}\left(0,\infty;BV\left(\Omega;\left\{0,1\right\}\right)\right)\cap BV_{loc}\left((0,\infty)\times\Omega\right)
        \]
and such that
        \begin{itemize}
            \item[(A.i)] $\phi^{\varepsilon}\to\phi^0$ strongly in $C^{0,s}_{loc}\left([0,\infty);L^1(\Omega)\right)$ for any $s\in\left(0,\frac{1}{2}\right)$ and $\rho_{\phi^{\varepsilon}}\to\phi^0$ in $L^{\infty}(0,T;L^1(\Omega))$.
            \item[(A.ii)] $\phi^0 = \chi_{E(t)}$ with $E(t)\subset \Omega$ such that $P(E(t),\Omega)<+\infty$ and  $|E(t)|=1$ for all $t>0$.
        \end{itemize}
        \item[(B)] There exists a family of $(d-1)$-integral Radon measures $(\mu_t)_{t\geq0}$ on $\Omega$ such that
        \begin{itemize}
            \item[(B.i)]  $\mu^{\varepsilon}\to\mu$ as Radon measures on $\Omega\times[0,\infty)$, where $d\mu=d\mu_tdt$.
            \item[(B.ii)] $\mu_t^{\varepsilon}\to\mu_t$ as Radon measures on $\Omega$ for all $t\geq0$.
            \item[(B.iii)] $\|\nabla\chi_{E(t)}\|(\varphi)\leq\mu_t(\varphi)$ for any $t>0$ and $\varphi\in C(\Omega;\R_+)$. Thus, $\partial^{\ast}E(t)\subset\textrm{supp}(\mu_t)$ for any $t>0$.
        \end{itemize}
        \item[(C)] There exists $\Lambda \in L^2_{loc}(0,+\infty)$ and a 
               measurable function $\Theta:\partial^{\ast}E(t)\to\mathbb{N}$ such that $(\mu_t)_{t>0}$ is an $L^2$-flow with a generalized velocity vector such that 
               \begin{equation}\label{eq:MCF}
        v=h-\frac{\Lambda(t)}{\Theta(t,\cdot)}\nu(t,\cdot)\quad\text{$\mathcal{H}^{d-1}$-a.e. on }\partial^{\ast}E(t)
        \end{equation}
        where $h$ is the generalized mean-curvature vector defined by \eqref{eq:mcv} and 
   $\nu(t,\cdot)$ is the inner unit normal vector of $E(t)$ on $\partial^{\ast}E(t)$.

\end{itemize}
\end{theorem}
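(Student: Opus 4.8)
The plan is to exploit the reformulation \eqref{eq:ACN} and treat \eqref{eq:phiepsilon} as an Allen--Cahn equation with the nonlocal forcing $G^\eps = \eps^{-1}({g^\ast}'(\phi^\eps + \ell^\eps) - {g^\ast}'(\phi^\eps))$, so that the bulk of the argument can be carried out within the varifold framework of \cite{MR11} once the key forcing estimate is in place. First I would extract from the energy dissipation inequality \eqref{eq:dissipation} the uniform bounds $\mathscr{J}_\eps(\phi^\eps(t)) \leq C$ and $\int_0^T\int_\Omega \eps|\pa_t\phi^\eps|^2 \leq C$. The Modica--Mortola part of \eqref{eq:Jee} then controls $\frac1\eps\int_\Omega \overline W(\phi^\eps) + \frac\eps2\int_\Omega|\na\phi^\eps|^2$, and the standard device of passing to $\psi^\eps := \Phi(\phi^\eps)$ with $\Phi' = \sqrt{2\overline W}$ yields a uniform bound on $\psi^\eps$ in $BV((0,T)\times\Omega)$; combined with the time-derivative bound this gives the space-time compactness and the H\"older-in-time estimate needed for (A.i). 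The $\Gamma$-liminf inequality forces the limit to take values in $\{0,1\}$, giving $\phi^0 = \chi_{E(t)}$, and the mass constraint $|E(t)| = 1$ follows from the exponential relaxation of $m^\eps(t) := \int_\Omega \phi^\eps\,dx$ towards $1$ (obtained by integrating \eqref{eq:phiepsilon} and using $\int_\Omega \rho_{\phi^\eps} = 1$) together with the convergence $\rho_{\phi^\eps} \to \phi^0$ coming from the duality-gap term in \eqref{eq:Jee}. This establishes (A).

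The heart of the proof, and the step I expect to be by far the hardest, is the forcing estimate of Proposition \ref{prop:L2-force}, namely $\sup_\eps \int_0^T \frac1\eps \int_\Omega |G^\eps|^2\,dx\,dt < \infty$. Since ${g^\ast}'$ is piecewise linear with slope $(2A)^{-1}$ on the transition interval $[\frac12 - A, \frac12 + A]$ and locally constant outside it (see \eqref{eq:gp}), the integrand $G^\eps$ is supported where $\phi^\eps$ or $\phi^\eps + \ell^\eps$ lies in the transition region, and there $|G^\eps| \lesssim \eps^{-1}|\ell^\eps|$. The estimate therefore reduces to a quantitative control of the spatially-constant Lagrange multiplier $\ell^\eps(t)$, which I would pin down through the constraint $\int_\Omega {g^\ast}'(\phi^\eps + \ell^\eps)\,dx = 1$: writing $F(\ell) = \int_\Omega {g^\ast}'(\phi^\eps + \ell)\,dx$, one has $F(\ell^\eps) = 1$ and $F'(\ell) = (2A)^{-1}|\{\frac12 - A \leq \phi^\eps + \ell \leq \frac12 + A\}|$, so $\ell^\eps$ is governed by the volume discrepancy $F(0) - 1$ divided by the (small, $O(\eps)$) measure of the diffuse interface. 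The delicate point is that this ratio need not be small pointwise in $t$; I would instead bound it after integration in time, using the relaxation equation for $m^\eps$, the energy bound on the interfacial measure, and the particular quadratic structure of $f$ in \eqref{eq:f0} to absorb the singular factor. Obtaining an $L^2(0,T)$ bound on $\eps^{-1}\ell^\eps$ (equivalently on $\eps^{-1/2}\|G^\eps\|_{L^2}$) uniformly in $\eps$ is the crux; everything downstream is comparatively standard.

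With the forcing estimate secured, I would turn to the energy measures $\mu_t^\eps$ and establish (B). Uniform mass bounds give, via a Banach--Alaoglu and diagonal argument, Radon-measure convergence $\mu^\eps \to \mu$ on $\Omega \times [0,\infty)$ and $\mu_t^\eps \to \mu_t$ for every $t$ (B.i)--(B.ii), the latter using the almost-monotonicity of $t \mapsto \mu_t^\eps(\Omega)$ afforded by \eqref{eq:dissipation}. The lower bound $\|\na\chi_{E(t)}\| \leq \mu_t$ in (B.iii) is the $\Gamma$-liminf applied slicewise. The integrality of $\mu_t$ is the subtle measure-theoretic ingredient: following Ilmanen \cite{I93} and \cite{MR08,MR11}, I would prove that the discrepancy measure $\xi^\eps := \frac\eps2|\na\phi^\eps|^2 - \eps^{-1}\overline W(\phi^\eps)$ converges to zero (as a signed measure, for a.e.\ $t$), which is precisely where the $L^2$ forcing bound is needed to run the monotonicity-formula argument, and then invoke the integrality theorem for limits of diffuse interfaces with vanishing discrepancy to conclude that $\mu_t$ is $(d-1)$-integral with $L^2$ generalized mean curvature.

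Finally, for (C), the vanishing discrepancy and the forcing estimate let me pass to the limit in the weak formulation of \eqref{eq:ACN} tested against $\na\phi^\eps\cdot X$ for $X \in C^1_c$, recovering the first-variation identity that identifies the generalized mean-curvature vector $h$ of $\mu_t$ and shows $(\mu_t)$ is an $L^2$-flow in the sense of Definition \ref{def:L2-flow}. The forcing $G^\eps$, being spatially constant through $\ell^\eps$ up to the transition profile, concentrates on $\partial^\ast E(t)$ and contributes, in the limit, a normal term of the form $-\Lambda(t)\nu/\Theta$; identifying $\Lambda \in L^2_{loc}(0,\infty)$ as the weak limit of a constant multiple of $\eps^{-1}\ell^\eps$ and $\Theta$ as the integer multiplicity of $\mu_t$ yields the velocity law \eqref{eq:MCF}. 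The orthogonality $v \perp T_x\mu_t$ and the square-integrability of $v$ follow from the corresponding properties in the Mugnai--R\"oger framework, completing the proof.
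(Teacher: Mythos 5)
Your overall architecture coincides with the paper's: recast \eqref{eq:phiepsilon} as the forced Allen--Cahn equation \eqref{eq:ACN}, prove the weighted $L^2$ bound on $G^\eps$, and then run the R\"oger--Sch\"atzle/Mugnai--R\"oger machinery for (B) and (C); Part (A) is done exactly as in the paper ($\psi^\eps=F(\phi^\eps)$, Aubin--Lions, the duality-gap term giving $\rho_{\phi^\eps}-\phi^\eps\to0$), and your identification of the limit force as $-\Lambda(t)\nu/\Theta$ via the weak $L^2$ limit of $\eps^{-1}\ell^\eps$ and the Radon--Nikodym density of $\|\na\chi_{E(t)}\|$ with respect to $\mu_t$ is the paper's Lemma \ref{lem:G} argument. (One structural remark: the paper does not re-derive vanishing discrepancy and integrality as you propose to; it invokes them as a black box, Theorem \ref{thm:MR11}.)

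However, there is a genuine gap at precisely the step you call the crux: the $L^2(0,T)$ bound on $\Lambda^\eps=\ell^\eps/(\gamma\eps)$ (Lemma \ref{lem:L2-multiplier}). Your mechanism --- differentiate the constraint $F(\ell)=\int_\Omega{g^\ast}'(\phi^\eps+\ell)\,dx=1$ and estimate $\ell^\eps$ by the volume discrepancy $1-F(0)$ divided by $F'$ --- cannot deliver the needed bound. The denominator $F'$ is at best of order $\eps$ and has no a priori lower bound (the diffuse-interface set can degenerate), while the numerator has no better a priori estimate than $O(\eps^{1/2})$: writing $\int_\Omega{g^\ast}'(\phi^\eps)\,dx=\int_\Omega\phi^\eps\,dx-\int_\Omega\overline W'(\phi^\eps)\,dx$, the mass-relaxation ODE controls the first term, but $\overline W'(\phi^\eps)$ is nonzero everywhere off the wells and Cauchy--Schwarz against the energy only gives $\bigl|\int_\Omega\overline W'(\phi^\eps)\,dx\bigr|\lesssim(\eps\,\mathscr{J}_\eps(\phi^\eps))^{1/2}$. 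This yields $\ell^\eps=O(\eps^{-1/2})$, far from the required $\ell^\eps\sim\eps$ in $L^2(0,T)$, and time-integration of the mass relaxation does not produce the missing cancellation. The paper's proof is entirely different: test \eqref{eq:ACN} with $\eps\na\phi^\eps\cdot\xi$, integrate by parts using the Fenchel--Young identity so that every term except $\gamma\Lambda^\eps(t)\int_\Omega\rho_{\phi^\eps}\div\xi\,dx$ is bounded by $\mathscr{J}_\eps(\phi^\eps(t))$ or $D^\eps(t)^{1/2}\mathscr{J}_\eps(\phi^\eps(t))^{1/2}$, then construct $\xi$ as $\na u$ with $\Delta u\approx\phi^0(t_0)-|\Omega|^{-1}$ and Neumann data, so that $\int_\Omega\rho_{\phi^\eps}\div\xi\,dx\geq\eta/8>0$ for $t$ near $t_0$ and $\eps$ small --- this is where \eqref{eq:Omega1} and the already-established convergence $\rho_{\phi^\eps}\to\chi_{E(t)}$ enter --- and conclude by compactness of $[0,T]$. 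Secondarily, even granting the multiplier bound, your localization of $G^\eps$ to an $O(\eps)$-measure set is only valid at times when $|\ell^\eps(t)|<\tfrac12(\tfrac12-A)$; the paper treats the complementary (bad) times by a separate uniform-in-time estimate \eqref{eq:bvghjk} combined with Chebyshev on $|\{t:|\ell^\eps(t)|\geq\tfrac12(\tfrac12-A)\}|$, a case your sketch omits.
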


We make the following remarks:
\begin{itemize}
\item[(i)] We can show (see \cite[Proposition 4.5]{MR08} and \cite[Remark 3.2]{MR11}) that the projection $V:=v\cdot \left(\frac{\na\phi^0}{|\na\phi^0|}\right)=v\cdot\nu$ belongs to $L^1(|\na \phi^0|)$ and satisfies, for $\xi\in C^1_c((0,T)\times\Omega;\mathbb{R}^d)$
$$ \int_0^T\int_\Omega V(t,\cdot) \xi(t,\cdot)d|\na \phi^0(t,\cdot)|\, dt =- \int_0^T \int_\Omega \phi^0(t,x) \pa_t\xi(t,x) \, dx\,dt$$
so that $V$ coincides with the normal velocity as defined for example in \cite{MR24} and satisfies
\begin{align}\label{eq:normal-velocity}
V = h\cdot \nu(t,\cdot) -\frac{\Lambda(t)}{\Theta(t,\cdot)} \quad\text{$\mathcal{H}^{d-1}$-a.e. on }\partial^{\ast}E(t)    
\end{align}
where $h\cdot \nu(t,\cdot) $ is the scalar mean-curvature.

In particular, our solution can be compared to that obtained in \cite{MR24}: The main difference is the presence of the 
multiplicity $\Theta$. 
The advantage, of course, is that Theorem \ref{thm:convergence-to-limit-flow} is an unconditional result, while the result of  \cite{MR24} required the  energy convergence assumption \eqref{eq:energy-assumption} to derive \eqref{eq:MCF}.

\item[(ii)] We also note that Theorem  \ref{thm:convergence-to-limit-flow}  does not include any condition  on the contact angle between the free boundary $\pa E(t)$ and the fixed boundary $\pa\Omega$. 
The assumption \eqref{eq:energy-assumption} is used in \cite{MR24} to derive a normal contact angle condition, which we are unable to recover here. This is a classical difficulty with this more general approach.
For general contact angle conditions, still under \eqref{eq:energy-assumption}, we refer to \cite{HL24,KMW24}.
\end{itemize}

The Lagrange multiplier $\Lambda(t)$ will be obtained as the weak $L^2$ limit of 
\[
\Lambda^{\varepsilon}(t):=\frac{\ell^{\varepsilon}(t)}{\gamma \varepsilon} .\]
In fact, as in \cite{T17}, the most delicate part of the proof is the derivation of appropriate estimates on the Lagrange multiplier term. 
A key role in this paper is played by the following proposition (proved in Section~\ref{subsec:preliminaries}):
\begin{proposition}\label{prop:Lagrange}
  For all $T>0$, there exists $\varepsilon_0=\varepsilon_0(T,|\Omega|,\sup_{\varepsilon>0}\mathscr{J}_{\varepsilon}(\phi_{\textrm{in}}^{\varepsilon}))>0$ such that
        \[
        \sup_{\varepsilon\in(0,\varepsilon_0)}\int_0^T|\Lambda^{\varepsilon}(t)|^2dt<\infty.
        \]
        In particular, there exists $\Lambda(t) \in L^2_{loc}(0,\infty)$ such that
$$\Lambda^{\varepsilon}\rightharpoonup \Lambda\text{ weakly in }L^2(0,T)\qquad\text{for all }T>0.
        $$
\end{proposition}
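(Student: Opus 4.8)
My plan is to make the Lagrange multiplier explicit through the mass constraint and to estimate it as a ratio of a ``mass defect'' over the measure of the diffuse interface. Fix $t$ and set $\Phi(s):=\int_\Omega {g^\ast}'(\phi^\eps(t,\cdot)+s)\,dx$. By \eqref{eq:ACN} the constraint reads $\Phi(\ell^\eps)=1$, while $\Phi(0)=\int_\Omega {g^\ast}'(\phi^\eps)\,dx=:m^\eps(t)$. Since ${g^\ast}'$ is the piecewise-linear function \eqref{eq:gp}, we have ${g^\ast}''=\frac1{2A}\mathbf{1}_{(\frac12-A,\frac12+A)}$, hence $\Phi$ is nondecreasing with $\Phi'(s)=\frac1{2A}\big|\{x:\phi^\eps(t,x)+s\in(\tfrac12-A,\tfrac12+A)\}\big|$, and integrating gives $1-m^\eps(t)=\int_0^{\ell^\eps}\Phi'(s)\,ds$. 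Thus $|\ell^\eps|\le |1-m^\eps|/\inf_{[0,\ell^\eps]}\Phi'$, and the whole estimate reduces to (a) a lower bound $\Phi'\gtrsim\eps$ on the denominator, and (b) an upper bound $|1-m^\eps|\lesssim\eps^2$ on the numerator, the latter in the integrated sense $\int_0^T|1-m^\eps|^2\,dt\lesssim\eps^4$. Indeed, (a) and (b) together give $\int_0^T|\Lambda^\eps|^2\,dt=\frac1{\gamma^2}\int_0^T|\ell^\eps/\eps|^2\,dt\lesssim\eps^{-4}\int_0^T|1-m^\eps|^2\,dt\lesssim1$.

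For the denominator, I would show the diffuse interface cannot degenerate. The bound $\sup_\eps\mathscr J_\eps(\phi^\eps_{\mathrm{in}})<\infty$ and the dissipation \eqref{eq:dissipation} give $\frac1\eps\int_\Omega\overline W(\phi^\eps(t))\,dx\lesssim1$ for all $t$; since $\overline W$ is bounded below by a positive constant on $[\frac12-A,\frac12+A]$ (the wells $0,1$ lie strictly outside this interval because $A<\frac12$), this forces $\phi^\eps(t)$ to be close to $\{0,1\}$ in measure, so the two phases occupy volumes bounded away from $0$ and $|\Omega|$ (using $\int\rho^\eps=1$ and $|\Omega|>1$, i.e.\ \eqref{eq:Omega1}). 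A relative isoperimetric inequality then bounds the interfacial area of $\{\phi^\eps>\tfrac12\}$ below uniformly in $t$ and $\eps$, and equipartition of the Modica--Mortola energy shows the transition region has thickness $\gtrsim\eps$; combining these yields $\Phi'(s)\gtrsim\eps$ for $s$ in the small range $[0,\ell^\eps]$.

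For the numerator, integrating \eqref{eq:ACN} over $\Omega$ and using the Neumann condition yields the exact splitting $1-m^\eps=(1-P^\eps)+\int_\Omega\overline W'(\phi^\eps)\,dx$, where $P^\eps(t):=\int_\Omega\phi^\eps\,dx$ and I used $\overline W'=\phi-{g^\ast}'(\phi)$. The first piece is governed by the mass relaxation built into the $\phi$-equation: integrating it gives $\frac{d}{dt}(1-P^\eps)=-\eps^{-2}(1-P^\eps)$, so $1-P^\eps(t)=(1-P^\eps(0))e^{-t/\eps^2}$; since the energy bound forces $|1-P^\eps(0)|\lesssim\eps$, this piece contributes $\int_0^T\eps^{-4}(1-P^\eps)^2\,dt\lesssim\eps^{-4}\cdot\eps^2\,|1-P^\eps(0)|^2\lesssim\eps^{-4}\cdot\eps^2\cdot\eps^2=1$. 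The second piece is the delicate one: although $\overline W'(\phi^\eps)$ is $O(1)$ across a layer of thickness $\sim\eps$, its integral nearly cancels because the optimal one-dimensional profile $q$ solves $q''=\overline W'(q)$ and hence $\int_\R\overline W'(q)=[q']_{-\infty}^{+\infty}=0$. Making this rigorous --- i.e.\ showing $\int_0^T\big(\int_\Omega\overline W'(\phi^\eps)\big)^2dt\lesssim\eps^4$ --- is the heart of the matter: I would do it by controlling the residual through the discrepancy measure $\xi^\eps:=\frac\eps2|\na\phi^\eps|^2-\frac1\eps\overline W(\phi^\eps)$ together with $\|\pa_t\phi^\eps\|_{L^2}$, both of which are tied to the dissipation \eqref{eq:dissipation}, exploiting that $\overline W$ is $C^{1,1}$ with nondegenerate wells and that ${g^\ast}'$ is explicit.

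The step I expect to be the main obstacle is precisely this profile-cancellation estimate for $\int_\Omega\overline W'(\phi^\eps)$ to the required order $\eps^2$, uniformly in time: soft arguments (the Fenchel--Young gap appearing in \eqref{eq:Jee}, or Cauchy--Schwarz on the $\eps$-thin layer) only give the defect $|1-m^\eps|\lesssim\eps$, which is off by one power of $\eps$ and is insufficient. Gaining the extra power requires the dynamic discrepancy control, exactly as the Lagrange-multiplier bound in \cite{T17} is the hardest part there; a secondary difficulty is proving the denominator lower bound $\Phi'\gtrsim\eps$ uniformly in $t$ (ruling out a vanishing interface), which is where \eqref{eq:Omega1} and the conservation $\int\rho^\eps=1$ are essential. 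Once Proposition \ref{prop:Lagrange} is established, the weak limit $\Lambda^\eps\rightharpoonup\Lambda$ follows from the uniform $L^2(0,T)$ bound, and it feeds into the forcing estimate of Proposition \ref{prop:L2-force} and the Allen--Cahn-with-forcing machinery of \cite{MR11}.
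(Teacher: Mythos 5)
Your reduction of the Proposition to the two ingredients (a) $\Phi'\gtrsim\eps$ and (b) $\int_0^T|1-m^\eps|^2\,dt\lesssim\eps^4$ is a sensible framing, and (a) can indeed be made to work (a coarea argument plus the relative isoperimetric inequality, using \eqref{eq:Omega1} and uniform-in-$t$ volume nondegeneracy of the two phases; no equipartition is needed). But (b), which you yourself flag as ``the heart of the matter,'' is not proved, and your proposed route to it contains a genuine error. The splitting $1-m^\eps=(1-P^\eps)+\int_\Omega\overline W'(\phi^\eps)\,dx$ is structurally unsuited to the hypotheses: the assumption $\sup_\eps\mathscr{J}_\eps(\phi^\eps_{\mathrm{in}})<\infty$ only gives $\|\phi^\eps_{\mathrm{in}}-\rho_{\phi^\eps_{\mathrm{in}}}\|_{L^2}\lesssim\eps^{1/2}$, hence $|1-P^\eps(0)|\lesssim\eps^{1/2}$ by Cauchy--Schwarz, not the $|1-P^\eps(0)|\lesssim\eps$ you assert. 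This is sharp: $\phi^\eps_{\mathrm{in}}=(\text{diffuse interface})+\eps^{1/2}$ has bounded energy (since $\overline W$ is $C^{1,1}$ with $\overline W'(0)=\overline W'(1)=0$, so $\overline W(\eps^{1/2})\lesssim\eps$), yet $|1-P^\eps(0)|\sim\eps^{1/2}$, and then your exponential-decay computation yields $\eps^{-4}\int_0^T(1-P^\eps)^2\,dt\sim\eps^{-1}\to\infty$. In this example $1-m^\eps(0)$ is nevertheless small, because ${g^\ast}'$ is flat near the wells and does not see the bulk shift, but only through an exact cancellation between your two pieces; so estimating the pieces separately cannot succeed, and estimating their sum puts you back where you started. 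The remaining ingredient, the $O(\eps^2)$ cancellation for $\int_\Omega\overline W'(\phi^\eps)\,dx$ uniformly in time, is only sketched via a discrepancy-measure heuristic and is precisely the missing analytic content.

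For comparison, the paper avoids inverting the constraint altogether and obtains a pointwise-in-time bound dynamically: it multiplies \eqref{eq:ACN} by $\eps\nabla\phi^\eps\cdot\xi$ for a vector field $\xi$ tangent to $\partial\Omega$, integrates by parts, and uses the Fenchel--Young identity $g(\rho_{\phi})+g^\ast(\phi+\ell)-\rho_{\phi}(\phi+\ell)=0$ to rewrite the multiplier term as $-\gamma\Lambda^\eps(t)\int_\Omega\rho_{\phi^\eps}\div\xi\,dx$ plus terms bounded by $\mathscr{J}_\eps(\phi^\eps(t))$ and $D^\eps(t)^{1/2}\mathscr{J}_\eps(\phi^\eps(t))^{1/2}$, where $D^\eps$ is the dissipation. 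It then constructs, for each $t_0$, a field $\xi_{t_0}=\nabla u$ (solving a Neumann problem with datum approximating $\phi^0(t_0)-|\Omega|^{-1}$) so that $\int_\Omega\rho_{\phi^\eps}(t)\div\xi_{t_0}\,dx\geq\eta/8$ for $t$ near $t_0$ and $\eps$ small --- this is where \eqref{eq:Omega1} and phase separation enter, playing the role of your nondegeneracy (a) --- and concludes by compactness of $[0,T]$ that $|\Lambda^\eps(t)|\lesssim\mathscr{J}_\eps(\phi^\eps(t))+D^\eps(t)^{1/2}\mathscr{J}_\eps(\phi^\eps(t))^{1/2}\in L^2(0,T)$. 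Any completion of your strategy would need a dynamical input of this kind: the static constraint $\Phi(\ell^\eps)=1$ combined with the energy bound alone cannot produce the extra power of $\eps$ you need in (b).
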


\subsection{A brief review  of the literature}\label{subsec:literature}
The volume preserving mean-curvature flow \eqref{eq:VPMCF} is a classical free boundary problem. 
The existence of global-in-time classical solutions with convex smooth initial data was proved by Gage in \cite{G86} in 2-d and by Huisken in \cite{H87} in all dimensions. The short-time existence of classical solutions to \eqref{eq:VPMCF} with smooth initial data (possibly nonconvex) was proved by Escher and Simonett \cite{ES98}. Weak notions of solutions to \eqref{eq:VPMCF} have also been studied: In \cite{MSS16}, 
Mugnai, Seis and Spadaro proved 
 the global existence of weak solutions in the sense of distributions  in dimension $d\leq7$ using minimizing movement schemes (see also Luckhaus, Sturzenhecker \cite{LS95}) under an assumption similar to \eqref{eq:energy-assumption}. The convergence of thresholding schemes to distributional BV solutions of \eqref{eq:VPMCF} was proved by Laux and Swartz \cite{LS17} also under an assumption similar to \eqref{eq:energy-assumption}. 
A similar approach was used by Laux and Simon \cite{LS18} to prove the convergence of solutions of a multiphase mass-conserving Allen-Cahn equation toward the solution of a multiphase volume-preserving mean curvature flow.
The weak-strong uniqueness for gradient-flow calibrations was proved by Laux \cite{L24}.

Investigating sharp interface limits of \emph{Allen-Cahn}-type equations has been a important subject as well.
 The classical Allen-Cahn equation \cite{AC79} reads
\[
\partial_t\phi^{\varepsilon}=\Delta\phi^{\varepsilon}-\frac{1}{\varepsilon^2}W'(\phi^\varepsilon),
\]
with a double-well potential $W$ given by $W(s):=\frac{1}{2}(1-s^2)^2$. The fact that the transition layer of $\phi^{\varepsilon}$ converges to interfaces moving by mean curvature in the limit  $\varepsilon\to0$ was justified 
in particular by Bronsard and Kon \cite{BK91},
 Evans, Soner and Souganidis \cite{ESS92} (using viscosity solutions techniques - see also \cite{CGG91,ES91}) and  Ilmanen \cite{I93} using the notion of solutions introduced in Brakke's seminal work \cite{B78}. We also refer to Chen \cite{C96}
 and  references within for an extensive review of the  literature on the Allen-Cahn equation.
In \cite{Sa}, Sato gave a short proof of this convergence, using tools introduced by R\"oger and Sch\"atzle in \cite{RS06}.
In \cite{MR11}, Mugnai and R\"oger extended this analysis to equations that include a forcing term:
\[
\partial_t\phi^{\varepsilon}=\Delta\phi^{\varepsilon}-\frac{1}{\varepsilon^2}W'(\phi^\varepsilon) + G^\eps,
\]
using the concept of $L^2$-flow that they developed in \cite{MR08} (which we also  use in this paper, see Definition \ref{def:L2-flow}) and which is similar to the notion of Brakke's flow.

The volume preserving  Allen-Cahn equation suggested by Rubinstein and Sternberg in  \cite{RS92}, is given by
\begin{align}\label{eq:RS}
\partial_t\phi^{\varepsilon}=\Delta\phi^{\varepsilon}-\frac{1}{\varepsilon^2}W'(\phi^\varepsilon)+\frac{1}{\varepsilon}\lambda^{\varepsilon}(t),    
\end{align}
where $\lambda(t)$ is a Lagrange multiplier that enforces $\int \phi^{\varepsilon}(t,x)dx=\int \phi^{\varepsilon}(0,x)dx$. Radially symmetric solutions to \eqref{eq:RS} were first studied by Bronsard and Stoth in \cite{BS97}, and 
Chen, Hilhorst and Logak \cite{CHL10}    proved  the convergence of the solution to \eqref{eq:RS}  to the classical solution of \eqref{eq:VPMCF} under the assumption that the latter exists.

An alternative Allen-Cahn equation with volume conservation was suggested by Golovaty \cite{G97} (see also Bretin and Brassel \cite{BB11}):
\begin{align}\label{eq:golovaty}
\partial_t\phi^{\varepsilon}=\Delta\phi^{\varepsilon}-\frac{1}{\varepsilon^2}W'(\phi^\varepsilon)+\frac{1}{\varepsilon}\lambda^{\varepsilon}(t)\sqrt{2W(\phi^{\varepsilon})}.
\end{align}
In this model, the effect of the Lagrange multiplier is concentrated on the transition layer.
In  \cite{AA14}, Alfaro and Alifrangis proved that  the  solution of \eqref{eq:golovaty}  converges to the classical solution of \eqref{eq:VPMCF} if such a solution exists 
(Kroemer and Laux \cite{KL24} obtained a rate of the convergence under the same assumption).
In  \cite{T17},  Takasao  
proved the (unconditional) convergence for  \eqref{eq:golovaty} in dimension $d=2,3$ using the same 
%
notion of weak solutions of \eqref{eq:VPMCF} as in \cite{MR08}.
As far as we know, a similar result has not been obtained for \eqref{eq:RS} and 
the concentration of the Lagrange multiplier near the transition interface plays a  crucial role in the proof. 
In \cite{T23} (motivated by \cite{KK20,MSS16}) Takasao considered a slightly different choice of the multiplier which ensures a further regularity property, and obtained a similar convergence result in all dimensions.

Our equation bears some similarity with \eqref{eq:golovaty}, especially obvious in its form \eqref{eq:ACN}, although we point out that the volume preservation of the limit $\phi^0:=\lim_{\varepsilon\to0}\phi^{\varepsilon}$ comes from the mass preservation of $\rho^{\varepsilon}$ since the volume of $\phi^{\varepsilon}$ may not be preserved for a fixed $\varepsilon>0$. 
In \cite{MR24}, the second author and Rozowski showed the convergence of \eqref{eq:elliptic-parabolic-rescaled} to volume-preserving mean curvature flow \eqref{eq:VPMCF} under the assumption of convergence of the energy  \eqref{eq:energy-assumption}. 
The present paper establishes the first unconditional convergence results for this problem, in the spirit of \cite{MR11,T17}.

\section{Proof of Theorem \ref{thm:convergence-to-limit-flow}}\label{sec:pf-of-main-thm}
\subsection{Phase separation}\label{subsec:rho-equation}
The first step is to prove that phase separation takes  place in the limit $\eps\to0$.
The following proposition is essentially  Part (A) of Theorem \ref{thm:convergence-to-limit-flow}.
It's proof is similar to the proof of \cite[Proposition 6.1]{MR24} and is recalled here for the sake of completeness.
\begin{proposition}\label{prop:phase-separation}
Under the  assumptions of Theorem \ref{thm:convergence-to-limit-flow}, there exist a subsequence (still denoted by $\varepsilon\to0$) and a function $ \phi^0\in C^{0,s}_{loc}\left([0,\infty);L^1(\Omega)\right)$ (for all $s\in\left(0,\frac{1}{2}\right)$) with
\[
\phi^0\in   L^{\infty}_{loc}\left(0,\infty;BV\left(\Omega;\left\{0,1\right\}\right)\right)\cap BV_{loc}\left((0,\infty)\times\Omega\right)
\]
such that
\begin{itemize}
\item[(i)] $\phi^{\varepsilon}\to\phi^0$ strongly in $C^{0,s}_{loc}\left([0,\infty);L^1(\Omega)\right)$ for any $s\in\left(0,\frac{1}{2}\right)$ and $\rho_{\phi^{\varepsilon}}\to\phi^0$ in $L^{\infty}(0,T;L^1(\Omega))$.
\item[(ii)] $\phi^0 = \chi_{E(t)}$ with $E(t)\subset \Omega$ such that $P(E(t),\Omega)<+\infty$ and  $|E(t)|=1$ for all $t>0$.
\end{itemize}
\end{proposition}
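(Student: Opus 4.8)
The plan is to extract all the needed compactness from the dissipation inequality \eqref{eq:dissipation}. Since $\sup_\varepsilon \mathscr{J}_\varepsilon(\phi_{\textrm{in}}^\varepsilon)<\infty$, \eqref{eq:dissipation} provides, for every $T>0$, the uniform bounds
\begin{equation*}
\sup_{t\in[0,T]}\mathscr{J}_\varepsilon(\phi^\varepsilon(t))\le C,\qquad \int_0^T\int_\Omega \varepsilon|\partial_t\phi^\varepsilon|^2\,dx\,dt\le C.
\end{equation*}
Reading off the three nonnegative pieces of the energy in the form \eqref{eq:Jee}, the first bound controls, uniformly in $t\le T$ and $\varepsilon$, the Modica--Mortola terms $\frac1\varepsilon\int_\Omega\overline W(\phi^\varepsilon)\,dx$ and $\frac\varepsilon2\int_\Omega|\nabla\phi^\varepsilon|^2\,dx$, as well as the Legendre gap $\frac1\varepsilon\int_\Omega\bigl(g(\rho_{\phi^\varepsilon})+g^\ast(\phi^\varepsilon)-\rho_{\phi^\varepsilon}\phi^\varepsilon\bigr)\,dx$. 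Because $\overline W$ grows quadratically at infinity (by \eqref{eq:gp}, $\overline W(\phi)=\tfrac12\phi^2-\phi+\mathrm{const}$ for $\phi\ge\tfrac12+A$), I also get a uniform $L^2(\Omega)$ bound on $\phi^\varepsilon(t)$, hence uniform integrability of the family $(\phi^\varepsilon)$.

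For the spatial structure I would run the standard Modica--Mortola argument. Setting $\Phi(r):=\int_0^r\sqrt{2\overline W(s)}\,ds$, Young's inequality gives $|\nabla\Phi(\phi^\varepsilon)|\le \frac1\varepsilon\overline W(\phi^\varepsilon)+\frac\varepsilon2|\nabla\phi^\varepsilon|^2$, so $\Phi(\phi^\varepsilon(t))$ is bounded in $\mathrm{BV}(\Omega)$ uniformly in $t\le T$ and $\varepsilon$. The compact embedding $\mathrm{BV}\hookrightarrow\hookrightarrow L^1$, together with the continuity of $\Phi^{-1}$ and the uniform integrability above, makes $\{\phi^\varepsilon(t)\}_\varepsilon$ precompact in $L^1(\Omega)$ for each fixed $t$. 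Moreover $\int_\Omega\overline W(\phi^\varepsilon(t))\le C\varepsilon\to0$ forces any limit to satisfy $\overline W(\phi^0(t))=0$ a.e., i.e.\ $\phi^0(t)\in\{0,1\}$; thus $\phi^0(t)=\chi_{E(t)}$, with $P(E(t),\Omega)<\infty$ coming from the $\mathrm{BV}$ bound.

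The time variable is where the main difficulty lies: the naive estimate degenerates, since the dissipation bound only controls $\|\partial_t\phi^\varepsilon\|_{L^2(0,T;L^2)}=O(\varepsilon^{-1/2})$, which blows up. I would circumvent this, as in \cite{MR24,MR11}, by differentiating $\Phi$ instead: from $\partial_t\Phi(\phi^\varepsilon)=\sqrt{2\overline W(\phi^\varepsilon)}\,\partial_t\phi^\varepsilon$ and Cauchy--Schwarz against the two dissipation terms,
\begin{equation*}
\|\Phi(\phi^\varepsilon(t_2))-\Phi(\phi^\varepsilon(t_1))\|_{L^1(\Omega)}\le C\,|t_2-t_1|^{1/2}
\end{equation*}
uniformly in $\varepsilon$. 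Using that $\Phi$ is increasing with uniformly continuous inverse on the relevant bounded range, together with the uniform integrability, this $\tfrac12$-Hölder bound transfers into a uniform-in-$\varepsilon$ modulus of continuity for $\phi^\varepsilon$ in $L^1(\Omega)$. Combined with the pointwise-in-$t$ precompactness, Arzel\`a--Ascoli (and a diagonal extraction in $T\to\infty$) yields a subsequence with $\phi^\varepsilon\to\phi^0$ in $C^{0,s}_{loc}([0,\infty);L^1(\Omega))$ for every $s<\tfrac12$, and $\phi^0\in L^\infty_{loc}(0,\infty;\mathrm{BV}(\Omega;\{0,1\}))\cap \mathrm{BV}_{loc}$. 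This gives (i) for $\phi^\varepsilon$ and the regularity asserted in (A).

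It remains to treat $\rho_{\phi^\varepsilon}$ and the volume constraint, which is where the model-specific structure enters. Since $g$ is $2A$-strongly convex on $[0,1]$ and both $\rho_{\phi^\varepsilon}$ and ${g^\ast}'(\phi^\varepsilon)$ lie in $[0,1]$ (by Proposition~\ref{prop:well-posed-min-prob} and \eqref{eq:gp}), the Legendre gap is exactly the Bregman divergence $g(\rho_{\phi^\varepsilon})+g^\ast(\phi^\varepsilon)-\rho_{\phi^\varepsilon}\phi^\varepsilon\ge A\,|\rho_{\phi^\varepsilon}-{g^\ast}'(\phi^\varepsilon)|^2$, so the third energy bound gives $\|\rho_{\phi^\varepsilon}-{g^\ast}'(\phi^\varepsilon)\|_{L^2(\Omega)}^2\le C\varepsilon\to0$ uniformly in $t\le T$. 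Because ${g^\ast}'$ is globally Lipschitz with ${g^\ast}'(0)=0$ and ${g^\ast}'(1)=1$, the convergence $\phi^\varepsilon\to\phi^0\in\{0,1\}$ yields ${g^\ast}'(\phi^\varepsilon)\to\phi^0$ and hence $\rho_{\phi^\varepsilon}\to\phi^0$ in $L^\infty(0,T;L^1(\Omega))$. Finally, passing to the limit in the mass constraint $\int_\Omega\rho_{\phi^\varepsilon}(t)\,dx=1$ gives $\int_\Omega\phi^0(t)\,dx=|E(t)|=1$ for every $t>0$ (using the time-continuity of $\phi^0$), which is (ii) and completes (i).
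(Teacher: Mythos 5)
Your overall strategy is essentially the paper's: run the Modica--Mortola argument on a transformed variable ($\Phi(\phi^{\varepsilon})$ for you, the truncated and normalized $F(\phi^{\varepsilon})$ in the paper), extract a uniform BV-in-space bound and a H\"older-$\tfrac12$-in-time $L^1$-valued bound from the dissipation inequality \eqref{eq:dissipation}, force $\phi^0(t)\in\{0,1\}$ from $\int_\Omega\overline W(\phi^{\varepsilon})\,dx\le C\varepsilon$, transfer the convergence to $\rho_{\phi^{\varepsilon}}$ through the energy, and pass the mass constraint to the limit to get $|E(t)|=1$. Two of your substitutions are perfectly sound alternatives: replacing the refined Aubin--Lions lemma of \cite{A00} by a hands-on Arzel\`a--Ascoli argument, and replacing the paper's direct use of the form \eqref{eq:energy3} (which gives $\tfrac{1}{2\varepsilon}\int_\Omega(\rho_{\phi^{\varepsilon}}-\phi^{\varepsilon})^2\,dx\le\mathscr J_{\varepsilon}(\phi^{\varepsilon}(t))$ at once) by the Bregman-divergence estimate $g(\rho)+g^{\ast}(\phi)-\rho\phi\ge A|\rho-{g^{\ast}}'(\phi)|^2$, which is correct since both $\rho_{\phi^{\varepsilon}}$ and ${g^{\ast}}'(\phi^{\varepsilon})$ lie in $[0,1]$ where $g$ is $2A$-strongly convex.

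The one step where your argument is genuinely weaker than the paper's is the transfer of compactness from $\Phi(\phi^{\varepsilon})$ back to $\phi^{\varepsilon}$ by inverting $\Phi$. The derivative $\Phi'=\sqrt{2\overline W}$ vanishes exactly at the wells $0$ and $1$ --- precisely where $\phi^{\varepsilon}$ concentrates --- and since $\overline W(\phi)=\tfrac12\phi^2$ near $0$ and $\overline W(\phi)=\tfrac12(\phi-1)^2$ near $1$, the inverse $\Phi^{-1}$ is only $\tfrac12$-H\"older at the corresponding values. Pushing $\|\Phi(\phi^{\varepsilon}(t_2))-\Phi(\phi^{\varepsilon}(t_1))\|_{L^1}\le C|t_2-t_1|^{1/2}$ through $\Phi^{-1}$ and Cauchy--Schwarz therefore yields only $\|\phi^{\varepsilon}(t_2)-\phi^{\varepsilon}(t_1)\|_{L^1}\le C|t_2-t_1|^{1/4}$, so Arzel\`a--Ascoli plus interpolation gives convergence in $C^{0,s}_{loc}([0,\infty);L^1(\Omega))$ only for $s<\tfrac14$, not for every $s<\tfrac12$ as you claim (and invoking merely a ``uniformly continuous inverse'' gives no H\"older exponent at all). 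The paper avoids inversion entirely: it keeps all of the compactness at the level of $\psi^{\varepsilon}=F(\phi^{\varepsilon})$, and then compares $\psi^{\varepsilon}$ with $\phi^{\varepsilon}$ directly, using that $v\mapsto F(v)-v$ vanishes at the wells and satisfies $|F(v)-v|\le C_{\delta}\overline W(v)$ outside a $\delta$-neighborhood of $\{0,1\}$, which together with the energy bound gives $\|\psi^{\varepsilon}-\phi^{\varepsilon}\|_{C([0,T];L^2(\Omega))}^2\le C|\Omega|\delta^2+C_{\delta}\,\varepsilon\,\sup_{\varepsilon>0}\mathscr J_{\varepsilon}(\phi^{\varepsilon}_{\textrm{in}})\to0$. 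This exploits the double-well structure instead of fighting the degeneracy of $\Phi^{-1}$, and it is the step you should substitute for your inversion argument; with it, the rest of your proof (phase separation, the $\rho_{\phi^{\varepsilon}}$ convergence, and the volume identity) goes through as you wrote it.
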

\begin{proof}
Define the function $F:\R_+\to\mathbb{R}$ by $F(v):=\gamma^{-1}\int_0^v\sqrt{2\overline{W}(u)}du$ for $v\leq1$ and $F(v)=F(1)=1$ for $v>1$. We let $\psi^{\varepsilon}:=F(\phi^{\varepsilon})$. Then, the Lipschitz continuity of $F$ gives
\begin{align*}
|\psi^{\varepsilon}|\leq C|\phi^{\varepsilon}|\leq C(\rho_{\phi^{\varepsilon}}+|\phi^{\varepsilon}-\rho_{\phi^{\varepsilon}}|).
\end{align*}
We note that, by \eqref{eq:dissipation} and  \eqref{eq:energy3} we have
\begin{align*}
\int_{\Omega}|\phi^{\varepsilon}-\rho_{\phi^{\varepsilon}}|dx\leq|\Omega|^{1/2}\left(\int_{\Omega}|\phi^{\varepsilon}-\rho_{\phi^{\varepsilon}}|^2dx\right)^{1/2}\leq\left(|\Omega|\varepsilon\sup_{\varepsilon>0}\mathscr{J}_{\varepsilon}(\phi_{in}^{\varepsilon})\right)^{1/2}.
\end{align*}
Together with the mass constraint $\int_{\Omega}\rho_{\phi^{\varepsilon}}dx=1$, we thus have $\psi^{\varepsilon}\in L^{\infty}(0,\infty;L^1(\Omega))$. Also, by Young's inequality,
\begin{align*}
\int_{\Omega}|\nabla\psi^{\varepsilon}|dx\leq\int_{\Omega}\sqrt{2\overline{W}(\phi^{\varepsilon})}|\nabla\phi^{\varepsilon}|dx\leq\int_{\Omega}\frac{\varepsilon|\nabla\phi^{\varepsilon}|^2}{2}+\frac{W(\phi^{\varepsilon})}{\varepsilon}dx\leq\sup_{\varepsilon>0}\mathscr{J}_{\varepsilon}(\phi_{in}^{\varepsilon}).
\end{align*}
Therefore, $\psi^{\varepsilon}$ is bounded (uniformly in $\eps$) in  $L^{\infty}(0,\infty;BV(\Omega))$.


Next, we see that $\partial_t\psi^{\varepsilon}\in L^2(0,\infty;L^1(\Omega))$ since, for $T>0$,
\begin{align*}
\int_0^T\left(\int_{\Omega}|\partial_t\psi^{\varepsilon}|\right)^2dt&\leq\int_0^T\left(\int_{\Omega}\sqrt{2\overline{W}(\phi^{\varepsilon})}|\partial_t\phi^{\varepsilon}|\right)^2dt\\
&\leq\int_0^T\left(\int_{\Omega}\frac{2\overline{W}(\phi^{\varepsilon})}{\varepsilon}dx\right)\left(\int_{\Omega}\varepsilon|\partial_t\phi^{\varepsilon}|^2dx\right)dt\\
&\leq\sup_{\varepsilon>0}\mathscr{J}_{\varepsilon}(\phi_{in}^{\varepsilon})\int_0^T\int_{\Omega}\varepsilon|\partial_t\phi^{\varepsilon}|^2dx\\
&\leq\left(\sup_{\varepsilon>0}\mathscr{J}_{\varepsilon}(\phi_{in}^{\varepsilon})\right)^2.
\end{align*}
Since $BV(\Omega)$ is compactly embedded in $L^q(\Omega)$ for $q\in\left[1,\frac{d}{d-1}\right)$, a refined Aubin-Lions lemma (see \cite[Theorem 1.1]{A00}) implies that $\{\psi^{\varepsilon}\}_{\varepsilon>0}$ is relatively compact in $C^{0,s}([0,T];L^1(\Omega))$ for $s\in\left(0,\frac{1}{2}\right)$ and in $L^{\infty}(0,T;L^q(\Omega))$ for $q\in\left[1,\frac{d}{d-1}\right)$. Up to a subsequence, we can thus assume that $\psi^{\varepsilon}$ converges to some function $\phi^0$ strongly in $C([0,T];L^q(\Omega))$ as $\varepsilon\to0$ for $q\in\left[1,\frac{d}{d-1}\right)$. The lower-semicontinuity of the $BV$-seminorm yields $\phi^0\in L^{\infty}_{loc}(0,\infty;BV(\Omega))$ and $\phi^0\in BV_{loc}((0,\infty)\times\Omega)$.

\medskip

Now, we claim that
$$
\limsup_{\varepsilon\to0}\|\psi^{\varepsilon}-\phi^{\varepsilon}\|_{C([0,T];L^2(\Omega))}=0.
$$
Indeed, the function $v\to F(v)-v$ vanishes at $v=0,1$, which are precisely the zeroes of $\overline{W}$. Since $F$ is bounded, for each $\delta\in(0,1)$, there exists a constant $C_{\delta}>0$ such that
$$
|F(v)-v|\leq C_{\delta}\overline{W}(v)\qquad\text{for }v\notin V_{\delta},
$$
where $V_{\delta}:=\{v\in\R_+\,:\,\text{dist}(v,\{0,1\})<\delta\}$. Therefore, it holds (using \eqref{eq:dissipation}, \eqref{eq:energy2}) that
\begin{align*}
\int_{\Omega}\left|\psi^{\varepsilon}-\phi^{\varepsilon}\right|^2dx&=\int_{\{x:\phi^{\varepsilon}(x)\in V_{\delta}\}}\left|\psi^{\varepsilon}-\phi^{\varepsilon}\right|^2dx+\int_{\{x:\phi^{\varepsilon}(x)\notin V_{\delta}\}}\left|\psi^{\varepsilon}-\phi^{\varepsilon}\right|^2dx\\
&\leq C|\Omega|\delta^2+C_{\delta}\int_{\Omega}\overline{W}(\phi^{\varepsilon})dx\\
&\leq C|\Omega|\delta^2+C_{\delta}\varepsilon\sup_{\varepsilon>0}\mathscr{J}_{\varepsilon}(\phi_{in}^{\varepsilon}).
\end{align*}
Taking $\varepsilon\to0$ and $\delta\to0$ in turn, we obtain that $\limsup_{\varepsilon\to0}\|\psi^{\varepsilon}-\phi^{\varepsilon}\|_{C([0,T];L^2(\Omega))}=0.$ Therefore,  the convergence of $\psi^{\varepsilon}$ to $\phi^0$ implies that of  $\phi^\varepsilon$  $C([0,T];L^q(\Omega))$ for $q\in\left[1,\frac{d}{d-1}\right)$.
Furthermore Fatou's lemma implies
$$
0\leq\int_{\Omega}\overline{W}(\phi^0(t))dx\leq\liminf_{\varepsilon\to0}\int_{\Omega}\overline{W}(\phi^{\varepsilon}(t))dx\leq\liminf_{\varepsilon\to0}\left(\varepsilon\sup_{\varepsilon>0}\mathscr{J}_{\varepsilon}(\phi_{in}^{\varepsilon})\right)=0,
$$
and so the limit $\phi^0(t)$ is a characteristic function with values $0,1$  for each $t\in[0,T]$.
Writing $\phi^0(t)=\chi_{E(t)}$, we see that $P(E(t),\Omega)=\int_{\Omega}|\nabla\chi_{E(t)}|<+\infty$ since we already know $\phi^0(t)\in BV(\Omega)$. Finally, again by \eqref{eq:dissipation}, \eqref{eq:energy2}, we have
$$
\limsup_{\varepsilon\to0}\|\phi^{\varepsilon}-\rho_{\phi^{\varepsilon}}\|_{C([0,T];L^2(\Omega))}\leq\limsup_{\varepsilon\to0}\left(\varepsilon\sup_{\varepsilon>0}\mathscr{J}_{\varepsilon}(\phi_{in}^{\varepsilon})\right)^{1/2}=0.
$$
This in particular implies that $\rho_{\phi^{\varepsilon}}$ converges to $\phi^0$ in $C([0,T];L^q(\Omega))$ for $q\in\left[1,\frac{d}{d-1}\right)$, and that for each $t\in[0,T]$,
$$
|E(t)|=\int_{\Omega}\phi^0(t)dx=\lim_{\varepsilon\to0}\int_{\Omega}\rho_{\phi^{\varepsilon}}(t)dx=1
$$
which complete the proof.
\end{proof}

\subsection{$L^2$-estimate on the forcing term}\label{subsec:preliminaries}

The crucial estimate  that we need to establish  in order to use well-known machinery is  an $L^2$ bound on the  last term (the ``Lagrange multiplier" term) in our equation \eqref{eq:ACN} which we recall here for convenience:
\begin{equation}\label{eq:fr}
\eps \pa_t \phi - \eps \Delta \phi =-\eps^{-1}\overline W'(\phi) + \eps^{-1} \left( {g^\ast }'(\phi+\ell) - {g^\ast }'(\phi) \right)
\end{equation}
with $\ell=\ell(t)$ such that $\int_\Omega {g^\ast }'(\phi(t,x)+\ell(t))\, dx=1$.
The main result of this section and cornerstone of the paper  is the following:
\begin{proposition}\label{prop:L2-force}
Under the assumptions of Theorem \ref{thm:convergence-to-limit-flow}, there exists $\varepsilon_0=\varepsilon_0(T,|\Omega|,\sup_{\varepsilon>0}\mathscr{J}_{\varepsilon}(\phi_{\textrm{in}}^{\varepsilon}))>0$ such that
\[
\sup_{\varepsilon\in(0,\varepsilon_0)}\frac{1}{\varepsilon^3}\int_0^T\int_{\Omega}\left| {g^\ast }'(\phi^\eps+\ell^\eps(t)) - {g^\ast }'(\phi^\eps) \right|^2\, dx\, dt<\infty.
\]
\end{proposition}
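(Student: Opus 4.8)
\emph{Step 1: reduction to a bound on the multiplier.} Write $D^\eps(t,x):={g^\ast}'(\phi^\eps+\ell^\eps(t))-{g^\ast}'(\phi^\eps)$, so the quantity to control is $\eps^{-3}\int_0^T\!\int_\Omega|D^\eps|^2$. From \eqref{eq:gp}, ${g^\ast}'$ is nondecreasing, valued in $[0,1]$, Lipschitz with constant $\tfrac1{2A}$, and constant outside the transition interval $[\tfrac12-A,\tfrac12+A]$, which is compactly contained in $(0,1)$ because $A<\tfrac12$. Hence $|D^\eps|\le\tfrac1{2A}|\ell^\eps|$ pointwise, and $D^\eps$ is supported in $S^\eps(t):=\{\,\mathrm{dist}(\phi^\eps,[\tfrac12-A,\tfrac12+A])\le|\ell^\eps|\,\}$. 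Since $\overline W$ is bounded below by a positive constant on any interval strictly inside $(0,1)$, the energy bound $\eps^{-1}\int_\Omega\overline W(\phi^\eps)\le\mathscr J_\eps(\phi^\eps)\le C$ from \eqref{eq:dissipation} yields $|S^\eps(t)|\le C\eps$, \emph{provided} $|\ell^\eps(t)|$ stays below the gap between $[\tfrac12-A,\tfrac12+A]$ and $\{0,1\}$. Granting this, $\eps^{-3}\int_\Omega|D^\eps|^2\le C\,\eps^{-3}|\ell^\eps|^2|S^\eps|\le C\,|\ell^\eps|^2\eps^{-2}=C\gamma^2|\Lambda^\eps|^2$, so Proposition~\ref{prop:L2-force} reduces to the $L^2$ bound $\int_0^T|\Lambda^\eps|^2\,dt\le C$ of Proposition~\ref{prop:Lagrange}. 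I would stress that the required pointwise smallness $\sup_t|\ell^\eps|\to0$ is genuinely a \emph{dynamic} statement: the static constraint $\int_\Omega{g^\ast}'(\phi^\eps+\ell^\eps)=1$ does not by itself pin $\ell^\eps$, since the slope $\tfrac{d}{ds}\int_\Omega{g^\ast}'(\phi^\eps+s)$ degenerates like $\eps$ as the layer thins, so this smallness must be drawn from the equation \eqref{eq:fr} (together with the $L^\infty$ and $H^2$ bounds on $\phi^\eps$).

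\emph{Step 2: the multiplier as an averaged mean curvature.} The mechanism behind the bound is that $\Lambda^\eps=\tfrac{\ell^\eps}{\gamma\eps}$ plays the role of a diffuse \emph{average} of the mean curvature over the interface, in analogy with the exact volume-preserving law \eqref{eq:VPMCF}, where $|E(t)|\equiv1$ forces $\Lambda(t)=\big(\int_{\pa E}\kappa\big)/P(E(t),\Omega)$. At the level of \eqref{eq:fr}, the discrete volume constraint $\int_\Omega\rho_{\phi^\eps}=1$ is what selects $\ell^\eps$, and one expects $\Lambda^\eps\,P(E(t),\Omega)\approx$ (integrated diffuse mean curvature) up to terms governed by the normal velocity. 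The heuristic that makes the $L^2_t$ bound plausible is dimensional: on the layer $|\na\phi^\eps|\sim\eps^{-1}$, so the dissipation density satisfies $\eps\int_\Omega|\pa_t\phi^\eps|^2\sim\int_{\pa E}V^2$ with $V=-\kappa+\Lambda$; the bound $\int_0^T\!\int_\Omega\eps|\pa_t\phi^\eps|^2\le C$ from \eqref{eq:dissipation} therefore controls the full normal velocity in $L^2_t$, and subtracting the curvature recovers $\Lambda^\eps$. Thus the plan is to test \eqref{eq:fr} against vector fields $\na\phi^\eps\cdot X$ to produce the first variation of the diffuse area and isolate the integrated mean curvature, using the explicit form \eqref{eq:gp} to check that the forcing $\eps^{-1}D^\eps$ enters the first variation only through the signed normal direction (consistently with the limit term $\tfrac{\Lambda}{\Theta}\nu$ in \eqref{eq:MCF}).

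\emph{Step 3: the main obstacle.} The hard part, following \cite{T17} (which rests on \cite{RS06,MR08}), is to make Step~2 quantitative and uniform in $\eps$. Three ingredients are needed and are intertwined. First, an $L^2$ bound on the diffuse mean curvature, i.e.\ the Röger--Schätzle estimate controlling $\eps\Delta\phi^\eps-\eps^{-1}\overline W'(\phi^\eps)$ in $L^2(\mu_t^\eps)$ by the energy; crucially the two layer-concentrated pieces $\eps\Delta\phi^\eps$ and $\eps^{-1}\overline W'(\phi^\eps)$ both scale like $\eps^{-1}$ and must not be estimated separately — one must use the Allen--Cahn cancellation $\eps\Delta\phi^\eps-\eps^{-1}\overline W'(\phi^\eps)=\eps\pa_t\phi^\eps-\eps^{-1}D^\eps$ and the (asymptotic) vanishing of the discrepancy measure $\tfrac\eps2|\na\phi^\eps|^2-\eps^{-1}\overline W(\phi^\eps)$. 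Second, a \emph{lower} bound on the interfacial mass $\eps^{-1}|S^\eps(t)|\gtrsim P(E(t),\Omega)\ge c>0$ (the latter by isoperimetry, since $|E(t)|=1<|\Omega|$), obtained through a monotonicity formula giving uniform upper density ratios, so that the ``denominator'' defining the average curvature does not collapse. Third, the dissipation \eqref{eq:dissipation} to absorb the $\eps\|\pa_t\phi^\eps\|_{L^2}$ contributions. I expect the decisive difficulty to be the simultaneous handling of the two competing concentration effects — guaranteeing that the nonlocal forcing does not resonate with the Allen--Cahn residual while the interfacial measure stays nondegenerate — and the adaptation of the monotonicity formula and the Röger--Schätzle estimate to the forced equation \eqref{eq:fr}, where the explicit piecewise-linear structure \eqref{eq:gp} of ${g^\ast}'$ is what keeps the forcing a controlled, sign-definite normal push concentrated on the layer.
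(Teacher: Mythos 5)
Your Step 1 correctly reproduces half of the paper's argument: on the set of times where $|\ell^\eps(t)|$ is below the gap $\delta=\tfrac12-A$, the support property of ${g^\ast}'(\cdot+\ell^\eps)-{g^\ast}'(\cdot)$ coming from \eqref{eq:gp}, the Lipschitz bound $|D^\eps|\le\tfrac1{2A}|\ell^\eps|$, and the energy estimate $|S^\eps(t)|\le C\eps$ do reduce the claim to the $L^2$ bound on $\Lambda^\eps$. But your reduction is conditional on a uniform-in-time smallness $\sup_{t\in[0,T]}|\ell^\eps(t)|<\delta/2$, which you explicitly acknowledge you cannot derive from the constraint alone and then propose to extract from the equation via a Takasao-style program (monotonicity formula, R\"oger--Sch\"atzle curvature estimate, discrepancy vanishing). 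This is a genuine gap: that uniform smallness is never proved in the paper, is not implied by Proposition \ref{prop:Lagrange} (which only gives $\int_0^T|\ell^\eps|^2\,dt\le C\eps^2$, hence smallness off a set of times of measure $O(\eps^2)$, not everywhere), and your Steps 2--3 are a list of anticipated difficulties rather than an argument.

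The paper closes exactly this gap with a far more elementary device that your proposal misses: a good/bad time decomposition. Split $[0,T]$ into $\mathcal N:=\{t:|\ell^\eps(t)|<\delta/2\}$ and its complement. On $\mathcal N$, argue as in your Step 1. On $[0,T]\setminus\mathcal N$, one has the crude but universal bound $\int_\Omega|D^\eps(t)|^2\,dx\le C\eps$, valid for \emph{every} $t$: both $\rho_{\phi^\eps}={g^\ast}'(\phi^\eps+\ell^\eps)$ (the mass-constrained minimizer) and $\bar\rho_{\phi^\eps}={g^\ast}'(\phi^\eps)$ (the unconstrained minimizer) satisfy $\|\cdot-\phi^\eps\|_{L^2(\Omega)}^2\le 2\eps\,\mathscr J_\eps(\phi^\eps_{\textrm{in}})$, by writing the energy in the form \eqref{eq:energy3} (whose infimum is attained at $\rho_{\phi^\eps}$) and bounding it below by the same infimum taken without the mass constraint (attained at $\bar\rho_{\phi^\eps}$); the triangle inequality then controls $D^\eps$. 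Finally, Chebyshev plus Lemma \ref{lem:L2-multiplier} give $|[0,T]\setminus\mathcal N|\le\frac{4}{\delta^2}\int_0^T|\ell^\eps|^2\,dt\le C\eps^2$, so the bad times contribute at most $\eps^{-3}\cdot C\eps\cdot C\eps^2=C$. No pointwise control of $\ell^\eps$, no monotonicity formula, and no $L^2$ curvature estimate are needed. For what it is worth, the multiplier bound itself (Lemma \ref{lem:L2-multiplier}) is proved in the paper by the route you gesture at in Step 2 --- testing \eqref{eq:fr} against $\eps\nabla\phi^\eps\cdot\xi$ --- but with $\xi$ built from a Neumann problem and a compactness-in-time covering argument using Proposition \ref{prop:phase-separation}, not with the geometric machinery of \cite{T17}.
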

The proof of this proposition relies on two facts: First, the fact that the rescaled Lagrange multiplier $\eps^{-1}\ell^\eps(t)$ can be controlled in $L^2(0,T)$ uniformly in space (it does not depend on $x$), and second,
 the concentration effect of the multiplier on the transition layer, which follows from the observation that 
 \[
{g^{\ast}}'(\phi+\ell)-{ g^{\ast}}'(\phi)=0\qquad\text{when  $\phi$ and $\phi+\ell$ are  near the wells of $\overline W$, $ 0$ and $1$},
\]
and allows us to use the energy to get some control in $L^2(\Omega)$ uniformly in time.

The first step is to establish the following $L^2$-estimate for the Lagrange multiplier
\[
\Lambda^{\varepsilon}(t):=\frac{1}{\gamma}\frac{\ell^{\varepsilon}(t)}{\varepsilon}.
\]
\begin{lemma}\label{lem:L2-multiplier}
Under the assumptions of Theorem \ref{thm:convergence-to-limit-flow}, there exists $\varepsilon_0=\varepsilon_0(T,|\Omega|,\sup_{\varepsilon>0}\mathscr{J}_{\varepsilon}(\phi_{\textrm{in}}^{\varepsilon}))>0$ such that
\[
\sup_{\varepsilon\in(0,\varepsilon_0)}\int_0^T|\Lambda^{\varepsilon}(t)|^2dt<\infty.
\]
\end{lemma}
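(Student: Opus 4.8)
The plan is to adapt the strategy of Takasao \cite{T17} for the Golovaty equation \eqref{eq:golovaty} to our forcing term $G^\eps := {g^\ast}'(\phi^\eps+\ell^\eps)-{g^\ast}'(\phi^\eps)$, exploiting two structural features of $G^\eps$. Since ${g^\ast}'$ is nondecreasing, $G^\eps$ has the constant sign of $\ell^\eps$; and by the explicit formula \eqref{eq:gp}, $G^\eps$ vanishes wherever $\phi^\eps$ and $\phi^\eps+\ell^\eps$ both lie outside the transition interval $[\tfrac12-A,\tfrac12+A]$, while $G^\eps\equiv\frac{\ell^\eps}{2A}$ on the bulk of the layer whenever $|\ell^\eps|\le 2A$. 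An elementary a priori argument first bounds $|\ell^\eps(t)|$ uniformly in $t$ and $\eps$: integrating \eqref{eq:fr} over $\Omega$ and using $\overline{W}'=\phi-{g^\ast}'(\phi)$ together with the constraint gives the relaxation identity $\eps^2\frac{d}{dt}\int_\Omega\phi^\eps\,dx+\int_\Omega\phi^\eps\,dx=1$, so $\int_\Omega\phi^\eps\,dx$ stays bounded, and Chebyshev's inequality together with $0\le {g^\ast}'\le1$ and $|\Omega|>1$ then forces $|\ell^\eps|\le C$. Moreover the crude bound $\int_\Omega|G^\eps|^2\,dx\le C\eps$ (from $G^\eps=(\rho_{\phi^\eps}-\phi^\eps)+\overline{W}'(\phi^\eps)$, the estimate $\|\rho_{\phi^\eps}-\phi^\eps\|_{L^2}^2\le 2\eps\mathscr{J}_\eps(\phi^\eps)$ coming from \eqref{eq:energy3}, and $|\overline{W}'(s)|^2\le C\overline{W}(s)$) confines the degenerate regime $|\ell^\eps|\ge 2A$ to a time set of small measure on which $\ell^\eps$ is bounded; this reduces the matter to the concentration regime $|\ell^\eps|\le 2A$.

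The core of the argument is to test \eqref{eq:fr} against $\sqrt{2\overline{W}(\phi^\eps)}$ and integrate over $\Omega$. Integrating by parts the Laplacian (the Neumann condition kills the boundary term) yields
$$\frac{1}{\eps}\int_\Omega G^\eps\sqrt{2\overline{W}(\phi^\eps)}\,dx=\underbrace{\eps\int_\Omega\pa_t\phi^\eps\sqrt{2\overline{W}(\phi^\eps)}\,dx}_{=:\,(\mathrm{i})}+\underbrace{\int_\Omega\frac{\overline{W}'(\phi^\eps)}{\sqrt{2\overline{W}(\phi^\eps)}}\left(\eps|\na\phi^\eps|^2+\frac{2\overline{W}(\phi^\eps)}{\eps}\right)dx}_{=:\,(\mathrm{ii})}.$$
By the concentration of $G^\eps$, the left-hand side equals $\frac{\ell^\eps}{2A\eps}\int_\Omega\sqrt{2\overline{W}(\phi^\eps)}\,dx$ up to lower-order contributions from the edges of the layer, i.e. it is $\Lambda^\eps(t)$ times an interfacial weight. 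Term $(\mathrm{i})$ I would control by Cauchy-Schwarz together with $\int_\Omega 2\overline{W}(\phi^\eps)\,dx\le C\eps$ and the dissipation \eqref{eq:dissipation}, which gives $\|(\mathrm{i})\|_{L^2(0,T)}\le C\eps$.

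Controlling $(\mathrm{ii})$ is the main obstacle. Writing $a+b=2\sqrt{ab}+(\sqrt a-\sqrt b)^2$ with $a=\eps|\na\phi^\eps|^2$ and $b=2\eps^{-1}\overline{W}(\phi^\eps)$ splits $(\mathrm{ii})$ into $2\int_\Omega\overline{W}'(\phi^\eps)|\na\phi^\eps|\,dx$ plus a nonnegative discrepancy integral. By the coarea formula the first piece equals $\int_{\R}\overline{W}'(\lambda)\,\mathcal{H}^{d-1}(\{\phi^\eps=\lambda\})\,d\lambda$, which would vanish if all level sets $\{\phi^\eps>\lambda\}$, $\lambda\in(0,1)$, had equal perimeter, since $\int_0^1\overline{W}'(\lambda)\,d\lambda=\overline{W}(1)-\overline{W}(0)=0$. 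Its genuine size, together with the discrepancy integral, must be estimated through a R\"oger-Sch\"atzle type inequality \cite{RS06} bounding the discrepancy and the $L^2$ norm of the first variation of $\mu_t^\eps$, exactly as in \cite{MR11,T17}; this is where the restriction $d=2,3$ and the monotonicity formula are needed, and it should yield $\|(\mathrm{ii})\|_{L^2(0,T)}\le C\eps$.

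Finally I would divide by the interfacial weight $\int_\Omega\sqrt{2\overline{W}(\phi^\eps)}\,dx$, for which I need a lower bound of order $\eps$. This follows cleanly, without the discrepancy, from Cauchy-Schwarz:
$$\left(\int_\Omega\sqrt{2\overline{W}(\phi^\eps)}\,|\na\phi^\eps|\,dx\right)^2\le\left(\int_\Omega\sqrt{2\overline{W}(\phi^\eps)}\,dx\right)\left(\int_\Omega\sqrt{2\overline{W}(\phi^\eps)}\,|\na\phi^\eps|^2\,dx\right),$$
where the left-hand factor is bounded below uniformly in $t$ by coarea and the relative isoperimetric inequality (using $|E(t)|=1$ and $|\Omega|>1$ from Proposition \ref{prop:phase-separation}, which keep the perimeters of the level sets $\{\phi^\eps>\lambda\}$, $\lambda\in(\tfrac14,\tfrac34)$, bounded below), while $\int_\Omega\sqrt{2\overline{W}(\phi^\eps)}|\na\phi^\eps|^2\,dx\le C\eps^{-1}$ by the $L^\infty$ bound on $\phi^\eps$ and the energy estimate; hence $\int_\Omega\sqrt{2\overline{W}(\phi^\eps)}\,dx\ge c\,\eps$ for $\eps<\eps_0$. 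Combining these bounds, $|\Lambda^\eps(t)|\le\frac{C}{\eps}(|(\mathrm{i})|+|(\mathrm{ii})|)$, the factors of $\eps$ cancel upon taking the $L^2(0,T)$ norm, and $\sup_{\eps<\eps_0}\int_0^T|\Lambda^\eps|^2\,dt<\infty$. I expect the two genuinely hard points to be the discrepancy estimate for $(\mathrm{ii})$ and making the lower bound on the interfacial weight uniform in both $t\in[0,T]$ and $\eps$.
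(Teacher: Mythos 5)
Your proof has a genuine gap at the step you yourself flag as the main obstacle: the bound $\|(\mathrm{ii})\|_{L^2(0,T)}\le C\eps$ for
$(\mathrm{ii})=\int_\Omega\frac{\overline W'(\phi^\eps)}{\sqrt{2\overline W(\phi^\eps)}}\bigl(\eps|\na\phi^\eps|^2+\tfrac{2}{\eps}\overline W(\phi^\eps)\bigr)dx$.
Because your multiplier appears with the interfacial weight $\int_\Omega\sqrt{2\overline W(\phi^\eps)}\,dx$, which is only of order $\eps$, you are forced to show that $(\mathrm{ii})$ (an a priori order-one quantity, since the energy density integrates to $O(1)$) is actually $O(\eps)$ in $L^2(0,T)$. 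This is a \emph{quantitative} equipartition statement: after your decomposition, it requires both $\int_\Omega\bigl|\eps|\na\phi^\eps|^2-\tfrac{2}{\eps}\overline W(\phi^\eps)\bigr|\,dx\lesssim\eps$ and, via coarea, $\bigl|\int_0^1\overline W'(\lambda)P(\{\phi^\eps>\lambda\})\,d\lambda\bigr|\lesssim\eps$, i.e.\ the perimeters of different level sets must agree to within $O(\eps)$. Neither is available: the R\"oger--Sch\"atzle/Mugnai--R\"oger results you invoke give only qualitative vanishing of the discrepancy measure (no rate), and the energy bound alone permits $O(1)$ differences between level-set perimeters (ghost interfaces, non-optimal profiles). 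Worse, invoking \cite{RS06,MR11,T17} here is circular: those theorems take as \emph{hypothesis} precisely the forcing bound \eqref{assthm}, i.e.\ $\sup_\eps\eps^{-3}\int_0^T\int_\Omega|{g^\ast}'(\phi^\eps+\ell^\eps)-{g^\ast}'(\phi^\eps)|^2\,dx\,dt<\infty$, which is Proposition \ref{prop:L2-force} — the very statement that Lemma \ref{lem:L2-multiplier} is the key ingredient for. (Also, Takasao's estimate of the multiplier in \cite{T17} is not the computation you describe; it rests on a divergence/test-vector-field identity, not on testing with $\sqrt{2W(\phi^\eps)}$.) Your preliminary steps — the relaxation identity for $\int\phi^\eps$, the uniform bound $|\ell^\eps|\le C$, the estimate of $(\mathrm{i})$, and the lower bound $\int_\Omega\sqrt{2\overline W(\phi^\eps)}\,dx\ge c\eps$ via uniform phase separation and the relative isoperimetric inequality — are sound in outline (though the claimed $L^\infty$ bound on $\phi^\eps$ is not established under the stated hypotheses), but they cannot compensate for the missing estimate of $(\mathrm{ii})$.

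The paper avoids this trap by testing \eqref{eq:fr} with $\eps\na\phi^\eps\cdot\xi$ for vector fields $\xi$ tangent to $\pa\Omega$, rather than with $\sqrt{2\overline W(\phi^\eps)}$. After integrating by parts and using the Fenchel--Young identity $g(\rho_\phi)+g^\ast(\phi+\ell)-\rho_\phi(\phi+\ell)=0$, the forcing term produces $-\gamma\Lambda^\eps(t)\int_\Omega\rho_{\phi^\eps}\div\xi\,dx$ plus the quantity $\eps^{-1}\int_\Omega[g(\rho_{\phi^\eps})-\rho_{\phi^\eps}\phi^\eps+g^\ast(\phi^\eps)]\div\xi\,dx$, which is nonnegative and bounded by $\|D\xi\|_{L^\infty}\mathscr J_\eps(\phi^\eps)$ because it is exactly the first term of the energy \eqref{eq:Jee}; the stress terms $\int\eps\na\phi^\eps\otimes\na\phi^\eps:D\xi$ and $\int\tfrac\eps2|\na\phi^\eps|^2\div\xi$ are likewise bounded by the energy, and the time-derivative term by $D^\eps(t)^{1/2}\mathscr J_\eps(\phi^\eps)^{1/2}$ (see \eqref{eq:bdL}). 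The decisive structural gain is that here the multiplier carries the \emph{order-one} weight $\int_\Omega\rho_{\phi^\eps}\div\xi\,dx$, bounded below by $\eta/8$ for suitably constructed $\xi$ (Neumann problems plus compactness in time, using $|E(t)|=1<|\Omega|$), so all error terms need only be $O(1)$ in the appropriate $L^2_t$ sense — which the energy inequality \eqref{eq:dissipation} gives for free — instead of the unattainable $O(\eps)$ that your order-$\eps$ weight demands. If you want to salvage your computation, you would need to replace the test function $\sqrt{2\overline W(\phi^\eps)}$ by one whose pairing with the forcing term produces a bulk (volume-type) rather than interfacial (perimeter-type) weight; that is precisely what $\na\phi^\eps\cdot\xi$ accomplishes.
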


\begin{proof}
We begin by multiplying \eqref{eq:fr} by $\varepsilon\nabla\phi^{\varepsilon}\cdot\xi$ 
where $\xi$ is a test vector field in $C^1([0,\infty)\times\overline{\Omega};\mathbb{R}^d)$ with $\xi\cdot n=0$ on $[0,\infty)\times\partial\Omega$.
We obtain
\begin{align*}
\int_{\Omega}\varepsilon\partial_t\phi^{\varepsilon}\nabla\phi^{\varepsilon}\cdot\xi \, dx=\int_{\Omega}\varepsilon\Delta\phi^{\varepsilon}\nabla\phi^{\varepsilon}\cdot\xi dx-\int_{\Omega}\varepsilon^{-1}\left( {g^\ast }'(\phi^\eps+\ell^\eps) - {g^\ast }'(\phi^\eps) \right) \nabla\phi^{\varepsilon}\cdot\xi\, dx.
\end{align*}
A couple of  integration by parts yield:
\[
\int_{\Omega}\varepsilon\Delta\phi^{\varepsilon}\nabla\phi^{\varepsilon}\cdot\xi \,dx=-\int_{\Omega}\varepsilon\nabla\phi^{\varepsilon}\otimes\nabla\phi^{\varepsilon}:D\xi \,dx+\int_{\Omega}\frac{\varepsilon}{2}|\nabla\phi^{\varepsilon}|^2\div \xi \, dx
\]
and 
\begin{align*}
\int_{\Omega}\varepsilon^{-1}\left( {g^\ast }'(\phi^\eps+\ell^\eps) - {g^\ast }'(\phi^\eps) \right) \nabla\phi^{\varepsilon}\cdot\xi \, dx
& = 
\int_{\Omega}\varepsilon^{-1}(- {g^\ast } (\phi^\eps+\ell^\eps)+{g^\ast }(\phi)) \div \xi \, dx.
\end{align*}
Since  $\rho_\phi  = {g^\ast }'(\phi+\ell)$,  the Fenchel-Young identity gives
$$
{g}(\rho_{\phi})+{g^\ast }(\phi+\ell) -\rho_{\phi}( \phi+\ell) =0. 
$$
We can thus write
\begin{align*}
\int_{\Omega}\varepsilon^{-1}\left( {g^\ast }'(\phi^\eps+\ell^\eps) - {g^\ast }'(\phi^\eps) \right) \nabla\phi^{\varepsilon}\cdot\xi\, dx
& = 
\int_{\Omega}\varepsilon^{-1}({g}(\rho_{\phi^\eps})  -\rho_{\phi^\eps}( \phi^\eps+\ell^\eps))+{g^\ast }(\phi^\eps)) \div \xi \, dx\\
& = 
\int_{\Omega}\varepsilon^{-1}({g}(\rho_{\phi^\eps})  -\rho_{\phi^\eps} \phi^\eps+{g^\ast }(\phi^\eps)) \div \xi dx  - \frac{\ell^\eps
}{\eps} \int_{\Omega} \rho_{\phi^\eps} \div \xi \, dx.
\end{align*}


Putting every together, we find the equality
\begin{align*}
\int_{\Omega}\varepsilon\partial_t\phi^{\varepsilon}\nabla\phi^{\varepsilon}\cdot\xi\, dx=&-\int_{\Omega}\varepsilon\nabla\phi^{\varepsilon}\otimes\nabla\phi^{\varepsilon}:D\xi \, dx+\int_{\Omega}\frac{\varepsilon}{2}|\nabla\phi^{\varepsilon}|^2\mathrm{div}\xi \, dx\\
&
-\varepsilon^{-1}\int_{\Omega} \big[ {g}(\rho_{\phi^\eps})  -\rho_{\phi^\eps} \phi^\eps+{g^\ast }(\phi^\eps)\big]\div\xi\, dx-\gamma \Lambda^\eps(t)\int_{\Omega}\rho_{\phi^{\varepsilon}}\div\xi \, dx.
\end{align*}
All the terms in this equality with the  exception of the last one, can be bounded by either the energy   \eqref{eq:Jee}  or the dissipation of energy 
$$ D^\eps(t) = \int_\Omega \eps |\pa_t\phi^\eps|^2\, dx.$$
In particular the definition of $\rho_\phi$ implies 
\begin{align*}
\varepsilon^{-1}\int_{\Omega}\big[ {g}(\rho_{\phi^\eps})  +{g^\ast }(\phi^\eps)-\rho_{\phi^\eps} \phi^\eps\big]\div\xi\, dx &  \leq 
\inf_{\int \rho\, dx =1 }\left\{\frac{1}{\varepsilon}\int_{\Omega}g(\rho) + g^\ast (\phi^\eps) -\rho \phi^\eps \, dx\right\} \| \div \xi\|_{L^\infty}\\
& \leq \| D\xi \|_{L^\infty(\Omega)} \mathscr J_\eps(\phi^\eps(t)).
\end{align*}
Proceeding similarly with the other terms, we deduce:
\begin{equation}\label{eq:bdL}
\gamma\Lambda^\eps(t) 
\int_{\Omega}\rho_{\phi^{\varepsilon}}(t)\div\xi\, dx 
\leq \| D\xi \|_{L^\infty(\Omega)} \mathscr J_\eps(\phi^\eps(t)) + \|\xi\|_{L^\infty} D^\eps(t)^{1/2} \mathscr J_\eps(\phi^\eps(t)) ^{1/2}.
\end{equation}
The rest of the proof follows that of \cite[Proposition 8.2]{MR24} and we give a brief sketch here for the reader's sake.
\medskip

In view of \eqref{eq:bdL}, we can get a $L^2$-bound on $\Lambda^{\varepsilon}(t)$ if we have  a uniform lower bound of $\int_{\Omega}\rho_{\phi^{\varepsilon}}(t)\div\xi dx$. Set $\eta:=1-|\Omega|^{-1}$, which is a positive number by \eqref{eq:Omega1}. For a fixed $t_0\in[0,T]$, we claim that we can find a vector field $\xi_{t_0}\in C^1(\overline{\Omega};\mathbb{R}^d)$ with $\xi_{t_0}\cdot n=0$ on $\partial\Omega$ such that
\begin{align*}
\int_{\Omega}\phi^0(t_0)\div(\xi_{t_0})dx\geq\frac{\eta}{2}.
\end{align*}
Indeed, we take a sequence of smooth functions $\psi_{k}\in C^{\infty}(\overline{\Omega})$ such that $\{\psi_{k}\}_{k}$ converges to $\phi^0(t_0,\cdot)-|\Omega|^{-1}$ strongly in $L^1(\Omega)$, $\int_\Omega \psi_kdx=0$ for all $k$, and $\sup_{k}\|\psi_k\|\leq 1$ and define $\xi_k=\nabla u_k$, where $u_k$ solves the Neumann boundary problem
\begin{align*}
\begin{cases}
\Delta u_k=\psi_k \quad &\text{in}\quad\Omega,\\
\nabla u_k\cdot n=0, \quad &\text{on}\quad\partial\Omega.
\end{cases}
\end{align*}
Then, we have $\div(\xi_k)=\psi_k\in C^1(\overline{\Omega})$ and $\xi_k\cdot n=0$ on $\partial\Omega$. Moreover, by Proposition \ref{prop:phase-separation}(ii),
\begin{align*}
\lim_{k\to\infty}\int_{\Omega}\phi^0(t_0,x)\div(\xi_k)(x)\, dx&=\lim_{k\to\infty}\int_{\Omega}\phi^0(t_0,x)\psi_k(x)\, dx\\
&=\int_{\Omega}\phi^0(t_0,x)\left(\phi^0(t_0,x)-|\Omega|^{-1}\right)\, dx=1-|\Omega|^{-1}=\eta.
\end{align*}
We can thus set $\xi_{t_0}:=\xi_{k_0}$ with a sufficiently large $k_0$ so that $\int_{\Omega}\phi^0(t_0,x)\div(\xi_{k_0})(x)\, dx\geq\frac{\eta}{2}$.

\medskip
Since $\phi^0\in C^{0,1/4}([0,T];L^1(\Omega))$ (see Proposition \ref{prop:phase-separation}(i)) and $\div(\xi_{t_0})\in L^\infty(\Omega)$, it also follows that 
\begin{align*}
\int_{\Omega}\phi^0(t)\div(\xi_{t_0})\, dx\geq\frac{\eta}{4}\qquad \mbox{ for $t\in(t_0-\delta,t_0+\delta)$ } 
\end{align*}
for some small $\delta>0$. 

Similarly, since $\rho_{\phi^{\varepsilon}}\to\phi^0$ in $L^{\infty}(0,T;L^1(\Omega))$ (see Proposition \ref{prop:phase-separation}(i)),
there exists $\varepsilon_{t_0}>0$ such that
\begin{align*}
\int_{\Omega}\rho_{\phi^{\varepsilon}}(t)\div(\xi_{t_0})\, dx\geq\frac{\eta}{8}
\end{align*}
for all $\varepsilon<\varepsilon_{t_0}$ and $t\in(t_0-\delta,t_0+\delta)$.

\medskip

By the compactness of the interval $[0,T]$, we can find $t_1,\dots , t_N$ such that  
$\cup_{i=1}^N(t_i-\delta_i,t_i+\delta_i) \supset [0,T]$.
We now define $\varepsilon_0=\min\{\varepsilon_{t_1},\cdots,\varepsilon_{t_N}\}$ and $K:=\max_{i=1,\cdots,N}\|\xi_{t_i}\|_{C^1(\overline{\Omega})}$.
Then, for every $\varepsilon<\varepsilon_0$ and $t\in[0,T]$, there exists $t_i$ such that $t\in (t_i-\delta_i,t_i+\delta_i)$ and 
by \eqref{eq:bdL} we get
\begin{align*}
\frac{\gamma\eta}{8}|\Lambda^{\varepsilon}(t)|\leq\gamma\left|\Lambda^{\varepsilon}(t)\int_{\Omega}\rho_{\phi^{\varepsilon}}(t)\div(\xi_{t_i})\, dx\right|\leq K\left(  \mathscr J_\eps(\phi^\eps(t)) + D^\eps(t)^{1/2} \mathscr J_\eps(\phi^\eps(t)) ^{1/2}\right).
\end{align*}
Since $\mathscr J_\eps(\phi^\eps(t)) ^{1/2} \in L^{\infty}(0,T)$ and $D^\eps(t)^{1/2}\in L^2(0,T)$ (see \eqref{eq:dissipation}), the result follows,
\end{proof}

\medskip



We now turn to the proof of Proposition \ref{prop:L2-force} which combines the previous estimates with a localisation property of  the function $x\mapsto {g^\ast}'(\phi(x)+\ell)-{g^\ast}'(\phi(x)) $. This last property requires in a critical way the very particular choice of nonlinearity \eqref{eq:f0} that we made in this paper.

\begin{proof}[Proof of Proposition \ref{prop:L2-force}]
We recall that our choice  of nonlinearity \eqref{eq:f0} and 
the definition of $g$  implies the following explicit formula for ${g^\ast}'$:
$$
{g^\ast}' (\phi) = \begin{cases}
0 & \mbox{ if } \phi \leq \frac 1 2-A\\
\frac{1}{2A} ( \phi+A-\frac 1 2) & \mbox{ if } \frac 1 2-A\leq \phi \leq \frac 1 2+A \\
1 & \mbox{ if } \phi \geq \frac 1 2 +A
\end{cases}
$$ 
In particular we note that ${g^\ast}'' (\phi)=0$ in $(-\infty,\delta)\cup (1-\delta,+\infty)$ with $\delta = \frac1 2-A>0$ and so
\begin{equation}\label{eq:zero}
 {g^\ast}'(\phi+\ell)-{g^\ast}'(\phi) = 0 \quad \mbox{ if } \phi \in \left(-\infty,\frac{\delta}{2}\right)\cup \left(1-\frac{\delta}{2},+\infty\right) \mbox{ and } |\ell|< \frac\delta 2.
 \end{equation}


For a fixed $T>0$, we define 
$$\mathcal N:=\left\{t\in[0,T]:|\ell^{\varepsilon}(t)|<\frac{1}{2}\delta\right\}.$$ 
Then for $t\in \mathcal N$, \eqref{eq:zero} gives:
\begin{align*}
\int_{\Omega}|{g^{\ast}}'(\phi^{\varepsilon}(x)+\ell^{\varepsilon})-{ g^{\ast}}'(\phi^{\varepsilon}(x))|^2dx&=\int_{\left\{x\in\Omega\ :\ \text{dist}(\phi^{\varepsilon}(x),\{0,1\})>\frac{\delta}{2}\right\}}|{g^{\ast}}'(\phi^{\varepsilon}(x)+\ell^{\varepsilon})-{ g^{\ast}}'(\phi^{\varepsilon}(x))|^2dx\\
&\leq |\ell^{\varepsilon}|^2\| {g^{\ast}}''\|_{L^\infty}^2\left|\left\{x\in\Omega:\text{dist}(\phi^{\varepsilon}(x),\left\{0,1\right\})>\frac{\delta}{2}\right\}\right|.
\end{align*}
The explicit formula for ${g^*}'$ above gives $\| {g^{\ast}}''\|_{L^\infty}= \frac{1}{2A}$ 
and the definition of $\mathscr J_\eps$ implies
\[
c_\delta  \left|\left\{x\in\Omega:\text{dist}(\phi^{\varepsilon}(x),\left\{0,1\right\})>\frac{\delta}{2}\right\}\right|\leq\int_{\left\{x\in\Omega\ :\ \text{dist}(\phi^{\varepsilon}(x),\{0,1\})>\frac{\delta}{2}\right\}} \overline {W}(\phi^{\varepsilon}) dx
 \leq  \eps \mathscr J_\eps(\phi^\eps(t) \leq  \eps \mathscr J_\eps(\phi^\eps_{in}).
\]
with 
$$c_\delta = \inf \left\{ \overline W(\phi)  , ;\, \text{dist}(\phi,\{0,1\})>\frac{\delta}{2}\right\}.$$
We thus have 
$$
\int_{\Omega}|{g^{\ast}}'(\phi^{\varepsilon}(x)+\ell^{\varepsilon})-{ g^{\ast}}'(\phi^{\varepsilon}(x))|^2dx
\leq  C |\ell^{\varepsilon}|^2\varepsilon = C |\Lambda^\eps(t)|^2 \eps^3.
$$
for some constant $C$ depending on $A$, $\delta$ and $ \mathscr J_\eps(\phi^\eps_{in})$.
Using Lemma \ref{lem:L2-multiplier} we deduce that
\begin{align}
\frac{1}{ \varepsilon^{3}} \int_{\mathcal N}\int_{\Omega}|{g^{\ast}}'(\phi^{\varepsilon}+\ell^{\varepsilon})-{ g^{\ast}}'(\phi^{\varepsilon})|^2dxdt\leq C\label{eq:int_N}    
\end{align}
for $\varepsilon\in(0,\varepsilon_0)$.

In order to control the integral over $[0,T]\setminus \mathcal N$ and complete the proof, we will first show that
\begin{equation}\label{eq:bvghjk}
\frac{1}{\eps}\int_{\Omega}|{g^{\ast}}'(\phi^{\varepsilon}(t)+\ell^{\varepsilon}(t))-{ g^{\ast}}'(\phi^{\varepsilon}(t))|^2dx \leq C \qquad\forall t\in [0,T].
\end{equation}
For this, we recall that $\rho_{\phi} = {g^\ast}'(\phi+\ell)$ minimizes $\int g(\rho) -\rho \phi\, dx$ with the constraint $\int \rho =1$, and we denote by $\bar \rho_{\phi} = {g^\ast}'(\phi)$ the minimizer of the same functional without any mass constraint.

The formulation of the energy \eqref{eq:energy3} gives
\begin{align*}
\mathscr J_\eps(\phi) 
& =
\inf_{\rho\in\mathcal{P}_{\text{ac}}(\Omega)}\left\{\frac{1}{\varepsilon}\int_{\Omega}\left(W(\rho)+\frac{1}{2}(\rho-\phi)^2\right)dx+\frac{\varepsilon}{2}\int_{\Omega}|\nabla\phi|^2dx\right\}\\
& = \frac{1}{\varepsilon}\int_{\Omega}\left(W(\rho_\phi)+\frac{1}{2}(\rho_\phi-\phi)^2\right)dx+\frac{\varepsilon}{2}\int_{\Omega}|\nabla\phi|^2dx
\end{align*}
which  implies
\begin{equation}\label{eq:inep1} \int_{\Omega} |\rho_{\phi^\eps}-\phi^\eps|^2\, dx = \int_{\Omega}  |{g^{\ast}}'(\phi^{\varepsilon}+\ell^{\varepsilon})-\phi^\eps|^2\, dx \leq 2 \mathscr J_\eps(\phi^\eps(t))  \eps\leq 2 \mathscr J_\eps(\phi^\eps_{in})  \eps
\end{equation}
Next, we note that we also have
\begin{align*}
\mathscr J_\eps(\phi) 
& \geq 
\inf_{\rho\geq 0} \left\{\frac{1}{\varepsilon}\int_{\Omega}\left(W(\rho)+\frac{1}{2}(\rho-\phi)^2\right)dx+\frac{\varepsilon}{2}\int_{\Omega}|\nabla\phi|^2dx\right\}\\
& = \frac{1}{\varepsilon}\int_{\Omega}\left(W(\bar \rho_\phi)+\frac{1}{2}(\bar \rho_\phi-\phi)^2\right)dx+\frac{\varepsilon}{2}\int_{\Omega}|\nabla\phi|^2dx.
\end{align*}
So we also  have 
\begin{equation}\label{eq:inep2} 
\int_{\Omega} |\bar \rho_{\phi^\eps}-\phi^\eps|^2\, dx = \int_{\Omega}  |{g^{\ast}}'(\phi^{\varepsilon} )-\phi^\eps|^2\, dx \leq 2 \mathscr J_\eps(\phi^\eps(t))  \eps\leq 2 \mathscr J_\eps(\phi^\eps_{in})  \eps
\end{equation}

Combining \eqref{eq:inep1} and \eqref{eq:inep2} gives \eqref{eq:bvghjk} which in turn implies 
$$
\frac{1}{\eps} \int_{[0,T]\setminus\mathcal N} \int_{\Omega}|{g^{\ast}}'(\phi^{\varepsilon}+\ell^{\varepsilon})-{ g^{\ast}}'(\phi^{\varepsilon})|^2dx\, dt \leq C |[0,T]\setminus\mathcal N|.
$$
Finally, Chebychev's inequality gives
$$
|[0,T]\setminus\mathcal N|
 = \left|\left\{ t\in [0,T]\, ;\, |\ell^\eps(t) | > \frac{\delta}{2}\right\}\right|
 \leq \frac{4}{\delta^2} \int_0^T |\ell^\eps(t)|^2\, dt =  \frac{C}{\delta^2} \eps^2 \int_0^T |\Lambda^\eps(t)|^2\, dt .
$$
Using Lemma \ref{lem:L2-multiplier} we deduce that
$$
\frac{1}{\eps^3} \int_{[0,T]\setminus\mathcal N} \int_{\Omega}|{g^{\ast}}'(\phi^{\varepsilon}+\ell^{\varepsilon})-{ g^{\ast}}'(\phi^{\varepsilon})|^2dx\, dt \leq  \frac{C}{\delta^2 } \int_0^T |\Lambda^\eps(t)|^2\, dt \leq C$$
for $\varepsilon\in(0,\varepsilon_0)$.

Together with \eqref{eq:int_N}, this implies the result.
\end{proof}

\subsection{End of the proof of Theorem \ref{thm:convergence-to-limit-flow}}\label{subsec:proof}
Thanks to the bound proved in Proposition \ref{prop:L2-force}, we can use well established tools for Allen-Cahn equation with forcing terms
 to complete the proof of  
Theorem \ref{thm:convergence-to-limit-flow}.
We summarize below the results that we will need, which can be found in 
\cite[Theorem 4.1, Proposition 4.9]{RS06} and
\cite[Theorem 3.1, Proposition 4.4]{MR11} (see also \cite[Theorem 4.1]{T17}).
\begin{theorem}\cite{RS06,MR11}\label{thm:MR11}
Suppose $d=2,3$ and let $\phi^{\varepsilon}$ be a solution of the perturbed Allen-Cahn equation
\begin{align}\label{eq:perturbed-allen-cahn}
\begin{cases}
\varepsilon\partial_t\phi^{\varepsilon}=\varepsilon\Delta\phi^{\varepsilon}-\frac{\overline{W}'(\phi^{\varepsilon})}{\varepsilon}+G^{\varepsilon} \quad &\text{in}\quad(0,T)\times\Omega,\\
\nabla\phi^{\varepsilon}\cdot n=0 \quad &\text{on}\quad(0,T)\times\partial\Omega, \\
\phi^{\varepsilon}(0,\cdot)=\phi^{\varepsilon}_{\textrm{in}} \quad &\text{in}\quad \Omega,
\end{cases}
\end{align}
where $\overline{W}\in C^{1}(\mathbb{R};\R_+)$ satisfies $\{\overline{W}=0\}=\{0,1\}$. We define
\[
 \mu_t^{\varepsilon}(\varphi):=\frac{1}{ {\gamma} }\int_{\Omega}\left(\frac{\overline{W}(\phi^{\varepsilon}(t,x))}{\varepsilon}+\frac{\varepsilon|\nabla\phi^{\varepsilon}(t,x)|^2}{2}\right)\varphi (x)\, dx, \qquad  \text{where } \gamma:=\int_0^{1}\sqrt{2\overline{W}(s)}\, ds
\]
for $t\in[0,T)$ and $\varphi \in C(\Omega)$. Suppose that there exists $\varepsilon_1>0$ such that
\begin{equation}\label{assthm}
\sup_{\varepsilon\in(0,\varepsilon_1)}\left(\mu_0^{\varepsilon}(\chi_{\Omega})+\frac{1}{\varepsilon} \int_0^T\int_{\Omega}|G^{\varepsilon}(x,t)|^2\, dx\, dt\right)<\infty.
\end{equation}
 Then, there exists a subsequence of $\varepsilon\to0$ (still denoted by $\varepsilon\to0$) such that
    \begin{itemize}
        \item[(a)] There exists a family of $(d-1)$-integral Radon measures $(\mu_t)_{t\in[0,T)}$ on $\Omega$ such that
        \begin{itemize}
            \item[(a.i)]  $\mu^{\varepsilon}\to\mu$ as Radon measures on $\Omega\times[0,T)$, where $d\mu=d\mu_tdt$.
            \item[(a.ii)] $\mu_t^{\varepsilon}\to\mu_t$ as Radon measures on $\Omega$ for all $t\in[0,T)$.
        \end{itemize}
        \item[(b)] There exists $G\in L^2\left(0,T;L^2(\mu_t)^d\right)$ such that
        \begin{equation}\label{eq:GG}
        \lim_{\varepsilon\to0}\frac{1}{ \gamma }\int_{\Omega\times[0,T)}-G^{\varepsilon}\nabla\phi^{\varepsilon}\cdot\xi\, dx\, dt=\int_{\Omega\times[0,T)}G\cdot\xi \, d\mu\quad\text{for any }\xi\in C_c\left([0,T)\times\Omega;\mathbb{R}^d\right).
        \end{equation}
        \item[(c)] $(\mu_t)_{t\in(0,T)}$ is an $L^2$-flow with a generalized velocity vector 
        $$v=h+G,$$ 
       and
        \[
        \lim_{\varepsilon\to0}\int_{\Omega\times[0,T)}v^{\varepsilon}\cdot\xi \, d\mu^{\varepsilon}=\int_{\Omega\times[0,T)}v\cdot\xi \, d\mu\quad\text{for any }\xi\in C_c\left([0,T)\times\Omega;\mathbb{R}^d\right).
        \]
        Here, $h$ is the generalized mean curvature vector of $\mu_t$ and
        \begin{align*}
        v^{\varepsilon}=
        \begin{cases}
        \frac{-\partial_t\phi^{\varepsilon}}{|\nabla\phi^{\varepsilon}|}\frac{\nabla\phi^{\varepsilon}}{|\nabla\phi^{\varepsilon}|} \quad &\text{if }\nabla\phi^{\varepsilon}\neq0,\\
        0 \quad &\text{otherwise.}
        \end{cases}
        \end{align*}
    \end{itemize}
\end{theorem}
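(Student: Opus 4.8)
The plan is to follow the strategy of Röger--Schätzle \cite{RS06} and Mugnai--Röger \cite{MR11}, whose core is a single uniform $L^2$ bound on the diffuse mean-curvature quantity, combined with an integrality theorem that is available precisely for $d\le 3$. First I would establish the basic energy estimates. Testing \eqref{eq:perturbed-allen-cahn} against $\pa_t\phi^\eps$ and integrating in space gives the identity
$$\frac{d}{dt}\big(\gamma\mu_t^\eps(\chi_\Omega)\big)+\int_\Omega\eps|\pa_t\phi^\eps|^2\,dx=\int_\Omega G^\eps\pa_t\phi^\eps\,dx.$$
Absorbing the right-hand side by Young's inequality and invoking \eqref{assthm} yields, for all $t\in[0,T)$,
$$\sup_{t}\mu_t^\eps(\chi_\Omega)\le C,\qquad \int_0^T\int_\Omega\eps|\pa_t\phi^\eps|^2\,dx\,dt\le C.$$
The first bound makes $\{\mu_t^\eps\}$ a family of Radon measures of uniformly bounded mass, so a standard compactness argument on $\Omega\times[0,T)$ together with a diagonal extraction in time produces the limit family $(\mu_t)$ and the convergences in (a.i)--(a.ii), with integrality postponed to the next step.

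The decisive observation is that the curvature-type quantity $w^\eps:=-\eps\Delta\phi^\eps+\eps^{-1}\overline W'(\phi^\eps)$ equals $G^\eps-\eps\pa_t\phi^\eps$ by the equation, so
$$\frac1\eps\int_0^T\int_\Omega|w^\eps|^2\,dx\,dt\le\frac2\eps\int_0^T\int_\Omega|G^\eps|^2\,dx\,dt+2\int_0^T\int_\Omega\eps|\pa_t\phi^\eps|^2\,dx\,dt\le C,$$
where the first term is controlled by \eqref{assthm} and the second by the dissipation bound above. This is exactly the hypothesis of the integrality theorem of \cite{RS06}: for a.e. $t$ the slice quantity $\eps^{-1}\int_\Omega|w^\eps(t)|^2\,dx$ is finite, and since $w^\eps/|\na\phi^\eps|$ plays the role of the scalar mean curvature of the diffuse interface $\{\phi^\eps=\tfrac12\}$, one concludes that $\mu_t$ is a $(d-1)$-integral varifold with generalized mean curvature $h\in L^2(\mu_t)$ and that $-\gamma^{-1}w^\eps\na\phi^\eps\,dx\rightharpoonup h\,d\mu$. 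Along the way one shows the discrepancy measure $\big(\tfrac\eps2|\na\phi^\eps|^2-\eps^{-1}\overline W(\phi^\eps)\big)\,dx$ vanishes in the limit, which gives the equipartition needed to identify the stress tensor $\eps\na\phi^\eps\otimes\na\phi^\eps\,dx\rightharpoonup(\I-\nu\otimes\nu)\,d\mu$. This is the main obstacle, and it is where the restriction $d=2,3$ is essential: the integrality result of \cite{RS06} requires the curvature integrability exponent $p$ to satisfy $p\ge d-1$, and here $p=2$, so only $d\le 3$ is admissible (with $d=3$ the critical case that needs the full strength of \cite{RS06}).

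It then remains to identify the forcing and the velocity and pass to the limit in the first variation. For the forcing, Cauchy--Schwarz gives
$$\Big|\gamma^{-1}\!\int_0^T\!\!\int_\Omega G^\eps\na\phi^\eps\cdot\xi\Big|\le\gamma^{-1}\Big(\tfrac1\eps\!\int_0^T\!\!\int_\Omega|G^\eps|^2\Big)^{1/2}\Big(\int_0^T\!\!\int_\Omega\eps|\na\phi^\eps|^2|\xi|^2\Big)^{1/2},$$
and since the last factor is controlled by $\|\xi\|_{L^2(\mu^\eps)}$ with $\mu^\eps\to\mu$, the functional $\xi\mapsto-\gamma^{-1}\int\int G^\eps\na\phi^\eps\cdot\xi$ is bounded on $L^2(\mu)$; this produces $G\in L^2(0,T;L^2(\mu_t)^d)$ and the convergence \eqref{eq:GG}, giving (b). For the velocity, the dissipation bound $\int\int\eps|\pa_t\phi^\eps|^2\le C$ makes $v^\eps\,d\mu^\eps$ precompact and, following the $L^2$-flow machinery of \cite{MR08}, identifies a generalized velocity $v\in L^2(\mu_t)$ with $v^\eps\,d\mu^\eps\rightharpoonup v\,d\mu$ and $v\perp T_x\mu_t$. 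Finally, writing the equation as $\eps\pa_t\phi^\eps=-w^\eps+G^\eps$, testing the first-variation identity for $\mu_t^\eps$ against $\xi$, and passing to the limit using the three convergences just established (mean curvature $\to h$, forcing $\to G$, velocity $\to v$) yields the $L^2$-flow identity $v=h+G$, which is (c).
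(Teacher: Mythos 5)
This theorem is imported by the paper verbatim from the literature — the paper gives no proof of it, pointing instead to \cite[Theorem 4.1, Proposition 4.9]{RS06} and \cite[Theorem 3.1, Proposition 4.4]{MR11} — and your sketch correctly reconstructs exactly the strategy of those references: the energy/dissipation identity absorbing the forcing via Young's inequality, the $\eps^{-1}L^2$ bound on $w^\eps=-\eps\Delta\phi^\eps+\eps^{-1}\overline W'(\phi^\eps)=G^\eps-\eps\pa_t\phi^\eps$ feeding the R\"oger--Sch\"atzle integrality and vanishing-discrepancy theorem (the source of the restriction $d=2,3$), the $L^2(\mu)$ representation of the limiting force, and the $L^2$-flow identity $v=h+G$ via the machinery of \cite{MR08}. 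The only glossed point is that the all-$t$ convergence in (a.ii) needs the almost-monotonicity $t\mapsto\gamma\mu_t^\eps(\Omega)-\tfrac{1}{2\eps}\int_0^t\|G^\eps\|_{L^2(\Omega)}^2\,ds$ nonincreasing (which your first display already supplies), so this is a faithful match rather than a gap.
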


\medskip

\begin{proof}[Proof of Theorem \ref{thm:convergence-to-limit-flow}]
We fix $T>0$ throughout the proof. Part (A) of Theorem \ref{thm:convergence-to-limit-flow} follows from Proposition \ref{prop:L2-force}.

\medskip

Next, we use the formulation \eqref{eq:ACN} of the equation to prove Parts (B) and (C): we see that 
 $\phi^{\varepsilon}$  solves \eqref{eq:perturbed-allen-cahn} with forcing term
$$G^{\varepsilon}:= \eps^{-1} \left( {g^\ast }'(\phi^\eps+\ell^\eps) - {g^\ast }'(\phi^\eps) \right) 
.$$
This term satisfies the following bound (see Proposition \ref{prop:L2-force}):
$$\sup_{\varepsilon\in(0,\varepsilon_0)}
\frac{1}{\varepsilon} \int_0^T\int_{\Omega}|G^{\varepsilon}(t,x)|^2\, dx\, dt <\infty
$$
for some $\varepsilon_0=\varepsilon_0(T,|\Omega|,\sup_{\varepsilon>0}\mathscr{J}_{\varepsilon}(\phi_{\textrm{in}}^{\varepsilon}))>0$. Finally, \eqref{eq:Jee} gives
$$ 
\mu_0^{\varepsilon}(\chi_{\Omega}) \leq \mathscr{J}_{\varepsilon}(\phi_{\textrm{in}}^{\varepsilon})).
$$
We thus see that under the assumption of Theorem \ref{thm:convergence-to-limit-flow}, the condition\eqref{assthm} is satisfied with $\varepsilon_1=\varepsilon_0$ and we can use Theorem \ref{thm:MR11}  to complete the proof of our main result:
\medskip

In particular, it is clear that Part (a) of Theorem \ref{thm:MR11} implies Part (B.i) and (B.ii) of Theorem \ref{thm:convergence-to-limit-flow} (the convergence of $\mu^\eps$ and $\mu^\eps_t$ as Radon measures). To prove (B.iii) of Theorem \ref{thm:convergence-to-limit-flow}, we define the function $F:\R_+\to\mathbb{R}$ by $F(v):=\gamma^{-1}\int_0^v\sqrt{2\overline{W}(u)}du$ for $v\leq1$ and $F(v)=F(1)=1$ for $v>1$, and we let $\psi^{\varepsilon}:=F(\phi^{\varepsilon})$. Then, for any $t>0$, we see that $\psi^{\varepsilon}\to\chi_{E(t)}$ as $\varepsilon\to0$ strongly in $L^1(\Omega)$ by Part (A) and by the Lipschitz continuity of $F$, and therefore by the semicontinuity,
\begin{align*}
\|\nabla\chi_{E(t)}\|(\varphi)&\leq\liminf_{\varepsilon\to0}\frac{1}{\gamma}\int_{\Omega}|\nabla\psi^{\varepsilon}(t)|\phi dx\\
&\leq\liminf_{\varepsilon\to0}\frac{1}{\gamma}\int_{\Omega}\left(\frac{\varepsilon|\nabla\phi^{\varepsilon}|^2}{2}+\frac{\overline{W}(\phi^{\varepsilon})}{\varepsilon}\right)\phi dx=\liminf_{\varepsilon\to0}\mu_t^{\varepsilon}(\varphi)=\mu_t(\varphi)
\end{align*}
for any $\varphi\in C(\Omega;\R_+)$. This implies  that $\partial^{\ast}E(t)\subset \textrm{supp}(\mu_t)$.

\medskip

We now turn to the proof of Part (C) of our theorem and the derivation of the mean-curvature flow equation.
First, we note that the $L^2(0,T)$ bound of Lemma \ref{lem:L2-multiplier} implies that (up to another subsequence) we can assume
$$
\Lambda^\eps(t) \to \Lambda(t)  \qquad \text{weakly in $L^2(0,T)$ as $\varepsilon\to0$}.
$$

Next, we need to identify the limiting force field $G$ defined by \eqref{eq:GG}.
This will  follow from the following lemma:
\begin{lemma}\label{lem:G}
We have 
\begin{align}\label{eq:limit-forcing-1}
\lim_{\varepsilon\to0}\frac{1}{\gamma}\int_{\Omega\times[0,T)}-G^\eps\nabla\phi^{\varepsilon}\cdot\xi \, dx\, dt=-\int_0^T\Lambda(t)\int_{\Omega}\xi\cdot\nu(t,\cdot) \, d\|\nabla\chi_{E(t)}\|\, dt 
\end{align}
for any $\xi\in C_c\left([0,T)\times\overline{\Omega};\mathbb{R}^d\right)$ with $\xi\cdot n=0$ on $[0,T)\times\partial\Omega$.
\end{lemma}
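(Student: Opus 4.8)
The plan is to exploit the gradient structure of the forcing term so as to rewrite the left-hand side as a product $\int_0^T\Lambda^\eps(t)\,J^\eps(t)\,dt$, in which the scalar factor $\Lambda^\eps$ carries the (merely) weak $L^2(0,T)$ convergence while the spatial factor $J^\eps$ converges \emph{strongly} in $L^2(0,T)$; the limit is then taken by a weak-strong pairing. The essential point is that the forcing is a perfect gradient in $x$, because $\ell^\eps=\ell^\eps(t)$ does not depend on $x$.

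First I would observe that
\[
G^\eps\nabla\phi^\eps=\eps^{-1}\left({g^\ast}'(\phi^\eps+\ell^\eps)-{g^\ast}'(\phi^\eps)\right)\nabla\phi^\eps=\eps^{-1}\nabla_x\!\left(g^\ast(\phi^\eps+\ell^\eps)-g^\ast(\phi^\eps)\right).
\]
Integrating against $\xi$ and discarding the boundary term (since $\xi\cdot n=0$ on $\partial\Omega$) turns the concentrating gradient into a bulk integral. The decisive algebraic step is then to write the bracket by the fundamental theorem of calculus, $g^\ast(\phi^\eps+\ell^\eps)-g^\ast(\phi^\eps)=\ell^\eps\int_0^1{g^\ast}'(\phi^\eps+\theta\ell^\eps)\,d\theta$, and to recall $\ell^\eps=\gamma\eps\Lambda^\eps$, so that the singular factor $\eps^{-1}$ is absorbed and the multiplier is factored out:
\[
\frac{1}{\gamma}\int_0^T\!\!\int_{\Omega}-G^\eps\nabla\phi^\eps\cdot\xi\,dx\,dt=\int_0^T\Lambda^\eps(t)\,J^\eps(t)\,dt,\qquad J^\eps(t):=\int_\Omega\left(\int_0^1{g^\ast}'(\phi^\eps+\theta\ell^\eps)\,d\theta\right)\div\xi\,dx.
\]

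The heart of the argument is to show that $J^\eps\to J$ strongly in $L^2(0,T)$, where $J(t):=\int_\Omega\chi_{E(t)}\div\xi\,dx=-\int_\Omega\xi\cdot\nu\,d\|\nabla\chi_{E(t)}\|$. For this I would use that ${g^\ast}'$ is globally Lipschitz with constant $\tfrac1{2A}$, so that $\big|\int_0^1{g^\ast}'(\phi^\eps+\theta\ell^\eps)\,d\theta-\bar\rho_{\phi^\eps}\big|\le\tfrac1{2A}|\ell^\eps|$ pointwise, with $\bar\rho_{\phi^\eps}={g^\ast}'(\phi^\eps)$. Together with $\bar\rho_{\phi^\eps}\to\chi_E$ in $L^\infty(0,T;L^1(\Omega))$ (which follows from \eqref{eq:inep2} and Proposition \ref{prop:phase-separation}) this yields
\[
|J^\eps(t)-J(t)|\le\|\div\xi\|_{L^\infty}\left(\tfrac{|\Omega|}{2A}\,|\ell^\eps(t)|+\|\bar\rho_{\phi^\eps}(t)-\chi_{E(t)}\|_{L^1}\right),
\]
and taking the $L^2(0,T)$ norm reduces the strong convergence to the two facts $\|\ell^\eps\|_{L^2(0,T)}=\gamma\eps\|\Lambda^\eps\|_{L^2(0,T)}\le C\eps\to0$ (Lemma \ref{lem:L2-multiplier}) and $\|\bar\rho_{\phi^\eps}-\chi_E\|_{L^2(0,T;L^1)}\to0$ (the latter because the uniform-in-time $L^1$ convergence on the bounded interval $[0,T]$ implies $L^2$-in-time convergence).

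Finally, since $\Lambda^\eps\rightharpoonup\Lambda$ weakly in $L^2(0,T)$ while $J^\eps\to J$ strongly in $L^2(0,T)$, the product passes to the limit and gives $\int_0^T\Lambda^\eps J^\eps\,dt\to\int_0^T\Lambda J\,dt$, which is exactly \eqref{eq:limit-forcing-1}. The main obstacle the argument must circumvent is that a naive passage to the limit would pair the weakly convergent multiplier $\Lambda^\eps$ against the energy concentration $G^\eps\nabla\phi^\eps$, which itself converges only weakly as a measure — a forbidden product of two weak limits. The integration by parts is precisely what trades the concentrating gradient for the bulk quantity $J^\eps$, and the Lipschitz estimate combined with the $O(\eps)$ control of $\ell^\eps$ is what upgrades the mere $L^1$-in-space convergence of $\rho_{\phi^\eps}$ to the strong $L^2$-in-time convergence required for the weak-strong pairing to succeed.
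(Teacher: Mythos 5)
Your proof is correct and follows essentially the same route as the paper's: exploit the gradient structure of the forcing term and integrate by parts (using $\xi\cdot n=0$), expand $g^\ast(\phi^\eps+\ell^\eps)-g^\ast(\phi^\eps)$ by the fundamental theorem of calculus to factor out $\ell^\eps=\gamma\eps\Lambda^\eps$, control the resulting error via the Lipschitz bound $\|{g^\ast}''\|_{L^\infty}=\frac{1}{2A}$ together with $\|\ell^\eps\|_{L^2(0,T)}=O(\eps)$ from Lemma \ref{lem:L2-multiplier}, and conclude by pairing the weakly convergent $\Lambda^\eps\rightharpoonup\Lambda$ against a strongly convergent bulk factor. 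The only (immaterial) difference is that the paper centers the averaged integrand at the endpoint, producing $\rho_{\phi^\eps}={g^\ast}'(\phi^\eps+\ell^\eps)$ and invoking $\rho_{\phi^\eps}\to\phi^0$ directly from Proposition \ref{prop:phase-separation}, whereas you compare it to $\bar\rho_{\phi^\eps}={g^\ast}'(\phi^\eps)$ and reach the same limit through \eqref{eq:inep2}.
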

Postponing the proof of this lemma to the end of this section, we note that it implies (together with \eqref{eq:GG})
$$
G\, d\mu_t =- \Lambda(t) \nu(t,\cdot) \, d\|\nabla\chi_{E(t)}\| 
$$
From the absolute continuity of $\|\nabla\chi_{E(t)}\|$ with respect to $\mu_t$ proven in (B.iii) for a fixed $t>0$, there is a density function $\zeta:=\frac{d\|\nabla\chi_{E(t)}\|}{d\mu_t}$. We thus can write
$$
G(t,\cdot)=-\Lambda(t)\zeta(\cdot)\nu(t,\cdot)\qquad\mathcal{H}^{d-1}\text{-a.e.}\text{ on }\text{supp}(\mu_t). 
$$
On $\text{supp}(\mu_t)\setminus \partial^{\ast}E(t)$, we have $\zeta=0$, and thus, $G=0$. On $\partial^{\ast}E(t)$, we have $\zeta>0$, and thus, $\Theta:=\zeta^{-1}=\left(\frac{d\|\nabla\chi_{E(t)}\|}{d\mu_t}\right)^{-1}$ is defined, and
$$
G =- \frac{\Lambda(t) }{\Theta(t,\cdot)} \nu(t,\cdot)\qquad\mathcal{H}^{d-1}\text{-a.e.}\text{ on }\partial^{\ast}E(t). 
$$
Moreover, the function $\Theta$ on $\partial^{\ast}E(t)$ is integer-valued due to the integrality of $\mu_t$.

\medskip

Part (c) of Theorem \ref{thm:MR11} now yields the conclustion that $(\mu_t)_{t>0}$ is an $L^2$-flow with the generalized velocity vector
$$ v = h +G = h-\frac{\Lambda}{\Theta} \nu(t,\cdot) \quad \mathcal H^{d-1}\text{-a.e. on } \pa^*E(t).$$ 
\end{proof}

\begin{remark}
We can also prove that 
  \[
        \int_{\Omega}v\cdot\nu(t,\cdot)\, d\|\nabla\chi_{E(t)}\|=0\quad\text{for a.e. }t\in(0,\infty).
               \]
Indeed, \cite[Proposition 4.5]{MR08} gives
$$
\int_0^T \int_\Omega  v\cdot\nu(t,\cdot) \eta(t,x)\, d|\na \phi^0(t)|\, dt =- \int_0^T \int_\Omega  \phi^0\pa_t\eta\, dx\,dt
$$
for any $\eta \in C^1_c((0,T)\times \Omega)$.
In particular, for any functions $\zeta \in C^1_c((0,T))$ and $\varphi\in C^1_c(\Omega)$,
Part (A.ii) of Theorem \ref{thm:convergence-to-limit-flow} yields
\[
\int_0^T\zeta(t)\int_{\Omega} \varphi(x) v\cdot\nu \, d\|\nabla\chi_{E(t)}\|dt=-\int_0^T \zeta'(t)\int_{\Omega}\varphi(x)\chi_{E(t)}\, dx\, dt
\]
Taking a sequence of function $\varphi$ that converges to $1$ on $\Omega$, we deduce
\[
\int_0^T\zeta(t)\int_{\Omega}   v\cdot\nu \, d\|\nabla\chi_{E(t)}\|dt=-\int_0^T \zeta'(t)\int_{\Omega}\chi_{E(t)}\, dx\, dt
=- \int_0^T\zeta'(t)\,dt=0.
\]
The claim follows as the test function $\zeta\in C^1_c((0,T))$ was arbitrary.
\end{remark}

\medskip

\begin{proof}[Proof of Lemma \ref{lem:G}]
We write:
\begin{align*}
& \frac{1}{\gamma}\int_{\Omega\times[0,T)}-G^\eps\nabla\phi^{\varepsilon}\cdot\xi \, dx\, dt\\
&\qquad=\frac{1}{\gamma}\int_{\Omega\times[0,T]} - \varepsilon^{-1}\left({ g^{\ast}}'(\phi^{\varepsilon}(x)+\ell^\eps)-{g^\ast}'(\phi^{\varepsilon}(x) )\right)\nabla\phi^{\varepsilon}\cdot\xi \, dx\, dt\\
&\qquad=\frac{1}{\gamma}\int_{\Omega\times[0,T]}\varepsilon^{-1}\left(g^{\ast}(\phi^{\varepsilon}(x)+\ell^\eps)-g^{\ast}(\phi^{\varepsilon}(x))\right)\div \xi\,  dx\, dt\\
&\qquad=\frac{1}{\gamma}\int_{\Omega\times[0,T]}\varepsilon^{-1}\left(\int_0^{\ell^{\varepsilon}}{g^\ast}'(\phi^{\varepsilon}(x)+s)ds\right)\div \xi\,  dx\, dt\\
&\qquad=\frac{1}{\gamma}\int_{\Omega\times[0,T]}\varepsilon^{-1}\left(\int_0^{\ell^{\varepsilon}}{g^\ast}'(\phi^{\varepsilon}(x)+s)-{g^\ast}'(\phi^{\varepsilon}(x)+\ell^{\varepsilon})ds\right)\div \xi\, dx\, dt\\
&\qquad\qquad+\frac{1}{\gamma}\int_{\Omega\times[0,T]}\varepsilon^{-1} \ell^{\varepsilon}\rho_{\phi^{\varepsilon}}\div \xi\, dx\, dt\\
&=:I_1+I_2.
\end{align*}
The first integral $I_1$ goes to zero since
\begin{align*}
|I_1|\leq\frac{1}{\gamma} \varepsilon\| {g^\ast}''\|_{L^\infty} \int_{\Omega\times[0,T)}\left(\frac{\ell^{\varepsilon}(t)}{\varepsilon}\right)^2\div \xi \, dx\, dt= {\gamma}\frac{\varepsilon}{2A} \|\div \xi\|_{L^{\infty}([0,T)\times\Omega)}\int_0^T|\Lambda^{\varepsilon}(t)|^2dt
\end{align*}
which converges to zero when 
 $\varepsilon\to0$ thanks to the bound of Lemma \ref{lem:L2-multiplier}. The second integral $I_2$ converges to 
 $$\int_{\Omega\times[0,T]}\Lambda(t)\chi_{E(t)}\div \xi \, dx\, dt$$ since $\frac{\ell^{\varepsilon(t)}}{\gamma \varepsilon}  = \Lambda^\eps(t)\rightharpoonup \Lambda(t)$ weakly in $L^2(0,T)$ and
$$
\int_{\Omega}\rho_{\phi^{\varepsilon}}(t,x)\mathrm{div}(\xi(t,x))dx\to\int_{\Omega}\phi^0(t,x)\mathrm{div}(\xi (t,x))dx\quad\text{in }L^{\infty}(0,T),
$$
by Part (A.i) of Theorem \ref{thm:convergence-to-limit-flow}.
We have thus proved that
$$
\lim_{\eps\to 0}
 \frac{1}{\gamma}\int_{\Omega\times[0,T)}-G^\eps\nabla\phi^{\varepsilon}\cdot\xi \, dx\, dt\\
 = 
\int_{\Omega\times[0,T]}\Lambda(t)\chi_{E(t)}\div \xi \, dx\, dt
$$
and the result follows since $\na \chi_{E(t)} = \nu(t,x) \|\na \chi_{E(t)}\|$.


\end{proof}



\appendix

\section{Proof of Proposition \ref{prop:AC}}\label{app:W} 
\begin{proof}[Proof of Proposition \ref{prop:AC}]
\item[(i)] First, we note that we have the following alternate formula for $ \overline W(\phi)$:
\begin{equation}\label{eq:WW}
\overline W(\phi) = \inf_{\rho\in \R}\left\{W(\rho)+\frac{1}{2}(\rho-\phi)^2 \right\}
\end{equation}
Indeed, we have $W(\rho)+\frac{1}{2}(\rho-\phi)^2  = g(\rho) -\rho \phi +\frac 1 2\phi^2$ and so 
$$ 
\inf_{\rho\geq 0}\left\{W(\rho)+\frac{1}{2}(\rho-\phi)^2 \right\}
=\inf_{\rho\in \R}\left\{g(\rho) -\rho\phi+\frac{1}{2}\phi^2 \right\}
=  \frac{1}{2}\phi^2 - \sup_{\rho\in \R}\left\{ \rho \phi - g(\rho)\right\}
=  \frac{1}{2}\phi^2 - g^*(\phi).$$
Together with the fact that $W$ is a double well potential, \eqref{eq:WW} readily implies that $\overline W(\phi)\geq 0$ and $\overline W(\phi)=0$ iff $\phi=0$ or $\phi=1$.
The explicit formula \eqref{eq:gp} for ${g^\ast}'$ shows that $\overline W'$ is Lipschitz and so $\overline W$  is $C^{1,1}$.

\item[(ii)] Using \eqref{eq:energy2}, we now write
\begin{align*}
\mathscr{J}_{\varepsilon}(\phi) 
& = 
\inf_{\rho\in\mathcal{P}_{\text{ac}}(\Omega)}\left\{\frac{1}{\varepsilon}\int_{\Omega}\left(g(\rho)-\rho\phi+\frac{1}{2}\phi^2\right)dx+\frac{\varepsilon}{2}\int_{\Omega}|\nabla\phi|^2dx\right\} \\
& = 
\inf_{\rho\in\mathcal{P}_{\text{ac}}(\Omega)}\left\{\frac{1}{\varepsilon}\int_{\Omega}\left(g(\rho)+g^\ast(\phi) -\rho\phi+  \frac{1}{2}\phi^2-g^\ast (\phi) \right)dx+\frac{\varepsilon}{2}\int_{\Omega}|\nabla\phi|^2dx\right\} 
\end{align*}
and \eqref{eq:Jee} follows.

\item[(iii)] The minimization problem   \eqref{eq:rhophi} can be recast as
$$
\rho_\phi = \argmin \left\{ \int_\Omega g(\rho) - \rho\phi\, dx \, ;\, \int_\Omega \rho(x)\, dx=1\right\}
$$
Since $g$ is convex, classical arguments implies that 
there exists a Lagrange multiplier $\ell=\ell(\phi)\in \R$ such that
$ \ell \in \pa g(\rho_\phi) -\phi$
or equivalently $\rho_\phi = {g^\ast}'(\phi+\ell)$.

\item[(iv)] It is now straightforward to rewrite the right hand side of  \eqref{eq:phiepsilon}  as
$$ \rho_\phi - \phi = {g^\ast }'(\phi+\ell) - \phi  ={g^\ast }'(\phi+\ell) - {g^\ast }'(\phi) -\overline W'(\phi)$$
which leads to \eqref{eq:ACN}
\end{proof}

\bibliographystyle{plain}
\bibliography{Preprint}

\end{document}